\numberwithin{equation}{section}
\newcommand{\bbE}{\mathbb{E}}
\newcommand{\bbN}{\mathbb{N}}
\newcommand{\bbP}{\mathbb{P}}
\newcommand{\bbR}{\mathbb{R}}
\newcommand{\bbZ}{\mathbb{Z}}
\newcommand{\cA}{\mathcal{A}}
\newcommand{\cD}{\mathcal{D}}
\newcommand{\cE}{\mathcal{E}}
\newcommand{\cF}{\mathcal{F}}
\newcommand{\cL}{\mathcal{L}}
\newcommand{\cN}{\mathcal{N}}
\newcommand{\cW}{\mathcal{W}}
\newcommand{\scrD}{\mathscr{D}}
\newcommand{\bvec}{\mathbf{b}}
\newcommand{\gvec}{\mathbf{g}}
\newcommand{\jvec}{\mathbf{j}}
\newcommand{\xvec}{\mathbf{x}}
\newcommand{\zvec}{\mathbf{z}}
\newcommand{\xivec}{\boldsymbol{\xi}}
\newcommand{\phivec}{\boldsymbol{\phi}}
\newcommand{\Bmat}{\mathbf{B}}
\newcommand{\Gmat}{\mathbf{G}}
\newcommand{\Mmat}{\mathbf{M}}
\newcommand{\Smat}{\mathbf{S}}
\newcommand{\Qmat}{\mathbf{Q}}
\newcommand{\Rmat}{\mathbf{R}}
\newcommand{\Imat}{\mathbf{I}}
\newcommand{\T}{T} 
\newcommand{\norm}[2]{     \| #1       \|_{ #2 }}
\newcommand{\normiii}[2]{\vert\kern-0.25ex\vert\kern-0.25ex\vert #1 \vert\kern-0.25ex \vert\kern-0.25ex\vert_{ #2 }}
\newcommand{\Normiii}[2]{\left\vert\kern-0.25ex\left\vert\kern-0.25ex\left\vert #1 \right\vert\kern-0.25ex\right\vert\kern-0.25ex\right\vert_{ #2 }}
\newcommand{\scalar}[2]{     ( #1       )_{ #2 }}
\newcommand{\duality}[2]{     \langle #1       \rangle_{ #2 }}
\newcommand{\white}{\cW}
\newcommand{\tr}{\operatorname{tr}}
\newcommand{\rd}{\,\mathrm{d}}
\newcommand{\from}{\colon}
\newcommand{\Nok}{N_{\mathrm{ok}}}
\newcommand{\proj}{\Pi_h}
\newcommand{\ptilde}{\widetilde{\Pi}_h}
\newcommand{\Pbeta}{P_h^\beta}
\newcommand{\Qtilde}{\widetilde{Q}_{h,k}^\beta}
\newcommand{\QtildeT}{\widetilde{Q}_{h,k}^{\beta *}}
\newcommand{\Lhtilde}{\widetilde{L}_{h}^{-\beta}}
\newcommand{\LhtildeT}{\widetilde{L}_{h}^{-\beta *}}
\newcommand{\Hdot}[1]{\dot{H}^{#1}}
\newtheorem{lemma}{Lemma}[section]
\newtheorem{theorem}[lemma]{Theorem}
\theoremstyle{remark}
\newtheorem{remark}[lemma]{Remark}
\theoremstyle{definition}
\newtheorem{assumption}[lemma]{Assumption}
\newtheorem{example}[lemma]{Example}
\begin{document}

\title[Weak convergence for fractional elliptic SPDEs]
	{Weak convergence of Galerkin approximations for fractional elliptic stochastic PDEs with spatial white noise}


\author[D.~Bolin, K.~Kirchner, and M.~Kov\'{a}cs]{David Bolin, Kristin Kirchner, and Mih\'{a}ly Kov\'{a}cs}

\address[D.~Bolin, K.~Kirchner, and M.~Kov\'{a}cs]{
Department of Mathematical Sciences\\
Chalmers University of Technology and University of Gothenburg\\
412 96 G\"oteborg, Sweden}

\email{{\normalfont{(K.~Kirchner, corresponding author) }}kristin.kirchner@chalmers.se}


\thanks{
Acknowledgement.
This work was supported in part by the
Swedish Research Council (grant Nos.~2016-04187, 2017-04274),
and the Knut and Alice Wallenberg Foundation
(KAW 20012.0067).
The authors thank Stig Larsson for valuable comments on the manuscript and an
anonymous referee who helped to improve the presentation of the results.
}


\begin{abstract}
The numerical approximation
of the solution to
a stochastic partial differential
equation with additive spatial white noise
on a bounded domain
is considered.
The differential operator is assumed
to be a fractional power of
an integer order elliptic differential operator.
The solution is approximated by means
of a finite element discretization
in space and a quadrature approximation
of an integral representation
of the fractional inverse
from the Dunford--Taylor calculus.

For the resulting approximation,
a concise analysis of the weak error
is performed. Specifically, for the class of
twice continuously Fr\'echet differentiable
functionals
with second derivatives of polynomial growth,
an explicit rate of weak convergence
is derived, and it is shown that the component of
the convergence rate
stemming from the stochasticity
is doubled compared to the corresponding strong rate.
Numerical experiments
for different functionals
validate the theoretical results.
\end{abstract}

\keywords{Stochastic partial differential equations,
          Weak convergence,
          Gaussian white noise,
          Fractional operators,
          Finite element methods,
          Galerkin methods,
          Mat\'{e}rn covariances,
          Spatial statistics.}

\subjclass[2010]{Primary: 35S15, 65C30, 65C60, 65N12, 65N30.} 

\date{\today}

\maketitle

\section{Introduction}\label{section:intro}

The representation of Gaussian random fields as solutions
to stochastic partial differential equations (SPDEs)
has become a popular approach in spatial statistics
in recent years.
It was observed already in~\cite{whittle54}
and \cite{whittle63} that
a Gaussian random field $u$ on $\bbR^d$
with a covariance function of Mat\'ern type \cite{matern60}
solves an SPDE of the form
$(\kappa^2 - \Delta)^\beta u = \white$.
Here, $\white$ is Gaussian white noise,
$\kappa>0$ is a parameter determining
the practical correlation range of the field,
and $\beta>d/4$ controls the smoothness parameter $\nu$
of the Gaussian Mat\'ern field via the equality $\nu = 2\beta - d/2$.

Later, this relation was the incentive
to consider the SPDE
\begin{align}\label{e:matern}
        (\kappa^2 - \Delta)^\beta u = \white \quad \text{in } \cD
\end{align}
for Gaussian random field approximations of Mat\'ern fields
on bounded domains $\cD\subsetneq\bbR^d$.
On the boundary $\partial \cD$, the operator $\kappa^2-\Delta$ is augmented
with, e.g., homogeneous Dirichlet or Neumann boundary conditions.
In \cite{lindgren11} it was shown that
by restricting the value of
$\beta$ to $2\beta\in\mathbb{N}$ and by solving the
stochastic problem~\eqref{e:matern}
by means of a finite element method,
the computational costs of many operations, which are needed
for statistical inference, such as sampling and likelihood evaluations
can be significantly reduced.
This decrease in computing time is one of the main reasons
for the popularity of the SPDE approach in spatial statistics.
In addition, it facilitates various extensions of the Mat\'ern model
which are difficult to formulate
using a covariance-based approach,
see, for instance, \cite{bolin14,bolin11,fuglstad2015,lindgren11,wallin15}.

However, the constraint $2\beta\in\mathbb{N}$ imposed by \cite{lindgren11}
restricts the value of the smoothness parameter $\nu$,
which is the most important parameter when the model is used for prediction \cite{stein99}.
In \cite{bkk17} we showed that this restriction can be avoided
by combining a finite element discretization in space
with a quadrature approximation
based on an integral representation
of the inverse fractional power operator 
from the Dunford--Taylor calculus.
We furthermore derived an explicit rate of convergence
for the strong mean-square error
of the proposed approximation
for a class of fractional elliptic stochastic equations
including \eqref{e:matern}.

In practice,
it is often not only necessary to sample
from the solution $u$ to \eqref{e:matern},
but also to estimate
the expected value $\bbE[\varphi(u)]$ of
a certain real-valued quantity of interest $\varphi(u)$.
The aim of this work is to provide
a concise analysis
of the weak error $|\bbE[\varphi(u)] - \bbE[\varphi(u_{h,k}^Q)]|$
for the approximation $u_{h,k}^Q$ proposed in \cite{bkk17}.
This analysis includes
the derivation of an explicit weak convergence rate
for twice continuously Fr\'{e}chet differentiable
real-valued functions $\varphi$, whose second derivatives
are of polynomial growth.
Functions of this form occur in many applications,
e.g., when integral means of the solution with respect
to a certain subdomain of $\cD$ are of interest,
or when a transformation of the model is used
as a component in a hierarchical model.
An example of the latter situation is
to consider logit or probit transformed Gaussian random fields
for binary regression models \cite[\S4.3.3]{rue2005gaussian}.

We prove that,
compared to the convergence rate of the strong error formulated in \cite{bkk17},
the component of the weak convergence rate stemming from the
stochasticity of the problem is doubled.
To this end,
two time-dependent stochastic processes are introduced,
which at time $t=1$ have the same probability distribution
as the exact solution $u$ and the approximation $u_{h,k}^Q$, respectively.
The weak error is then bounded
by introducing an associated Kolmogorov backward equation
on the interval $[0,1]$ and applying It\^{o} calculus.

The structure of this article is as follows:
In \S\ref{section:main} we formulate the
equation of interest in
a Hilbert space setting similarly to \cite{bkk17}
and state our main result
on weak convergence of the approximation
in Theorem~\ref{thm:weak-conv}.
A detailed proof of Theorem~\ref{thm:weak-conv}
is given in \S\ref{sec:derivation}.
For validating the theoretical result in practice,
we describe the outcomes of several
numerical experiments in \S\ref{section:numexp}.
Finally, \S\ref{section:conclusion} concludes the article with a discussion.

\vspace{2\baselineskip}

\section{Weak approximations}\label{section:main}

The subject of our investigations is the fractional order equation considered in~\cite{bkk17},
\begin{align}\label{e:Lbeta}
    L^\beta u = g + \white,
\end{align}
for $\beta\in(0,1)$, where $\white$ denotes Gaussian
white noise defined on a complete probability space
$(\Omega, \cA, \bbP)$ with values in a separable Hilbert space $H$.
Here and below, (in-)equalities involving random terms are meant
to hold $\bbP$-almost surely, if not specified otherwise.
Furthermore, we use the notation $X\overset{d}{=}Y$
to indicate that two random variables $X$ and $Y$ have the same
probability distribution.

Similarly to~\cite{bkk17}, we make the following assumptions:
$L\from\scrD(L) \subset H \to H$
is a densely defined, self-adjoint, positive definite operator 
and has a compact inverse $L^{-1}\from H \to H$.
In this case, $-L$ generates an analytic strongly continuous
semigroup $(S(t))_{t\geq 0}$ on $H$.
The $H$-orthonormal eigenvectors of $L$ are denoted by $\{e_j\}_{j\in\bbN}$
and the corresponding eigenvalues by $\{\lambda_j\}_{j\in\bbN}$.
These values are listed in nondecreasing order
and we assume that there exist constants
$\alpha, c_\lambda, C_\lambda > 0$ such that
\begin{align}\label{e:ass:lambdaj}
    c_\lambda \, j^\alpha \leq \lambda_j \leq C_\lambda \, j^\alpha
    \qquad \forall j\in\bbN.
\end{align}

The action of the fractional power operator $L^\beta$ in~\eqref{e:Lbeta}
is well-defined on
\begin{align*}
    \Hdot{2\beta} := \scrD(L^{\beta})
        = \Bigl\{ \psi\in H : \norm{\psi}{2\beta}^2 := \norm{L^\beta \psi}{H}^2 = \sum_{j\in\bbN} \lambda_j^{2\beta} \scalar{\psi, e_j}{H}^2 < \infty \Bigr\},
\end{align*}
which is itself a Hilbert space with inner product
$\scalar{\phi, \psi}{2\beta} := \scalar{L^{\beta} \phi, L^{\beta} \psi}{H}$.
Furthermore, there exists a unique continuous extension of $L^\beta$ to
an isometric isomorphism $L^\beta \from \Hdot{r} \to \Hdot{r-2\beta}$
for all $r\in\bbR$, see~\cite[Lem.~2.1]{bkk17}.
Here, for $s > 0$, the negative-indexed space $\Hdot{-s}$ is defined
as the dual space of $\Hdot{s}$. 
After identifying the dual space~$H^*$ of~$\Hdot{0} := H$
via the Riesz map, we obtain the Gelfand triple
$\Hdot{s} \hookrightarrow H \cong H^* \hookrightarrow \Hdot{-s}$
with continuous and dense embeddings.
The norm on the dual space $\Hdot{-s}$
can be expressed by
\begin{align*}
    \norm{g}{-s}
    =
    \sup\limits_{\phi\in\Hdot{s}\setminus\{0\}}
    \frac{\duality{g, \phi}{}}{\norm{\phi}{s}}
    =
    \biggl( \sum_{j\in\bbN} \lambda_j^{-s} \duality{g, e_j}{}^2 \biggr)^{\frac{1}{2}},
\end{align*}
where $\duality{\,\cdot\,,\,\cdot\,}{}$ denotes
the duality pairing on $\Hdot{-s}$ and
$\Hdot{s}$,~\cite[Proof~of~Lem.~5.1]{thomee2007}.
With this representation of the dual norm
and the growth~\eqref{e:ass:lambdaj} of the eigenvalues~$\lambda_j$ at hand,
it is an immediate consequence
of a Karhunen--Lo\`{e}ve expansion of the white noise $\white$
with respect to the $H$-orthonormal eigenvectors $\{e_j\}_{j\in\bbN}$ 
that $\white$ has mean-square regularity in $\Hdot{-s}$
for every $s > \alpha^{-1}$, see~\cite[Prop.~2.3]{bkk17}.
Consequently, \eqref{e:Lbeta} has a solution
$u \in L_2(\Omega; \Hdot{2\beta - s})$ for $s > \alpha^{-1}$ if $g\in\Hdot{-s}$.

\subsection{The Galerkin approximation}\label{subsec:fraceq}

In the following, let $(V_h)_{h\in(0,1)}$
be a family of
subspaces of $\Hdot{1}=\cD(L^{1/2})$
with finite dimensions
$N_h := \dim(V_h)$
and let $\proj \from H \to V_h$
be the $H$-orthogonal projection
onto $V_h$.
For $g\in H$, we define the finite element approximation
of $v = L^{-1}g$ by
$v_h = L_h^{-1} \proj g$, where $L_h$
denotes the Galerkin discretization
of the operator $L$ with respect to $V_h$,  i.e.,
\begin{align*}
L_h \from V_h \to V_h,
\qquad
\scalar{L_h\psi_h,\phi_h}{H} = \duality{L\psi_h,\phi_h}{}
\quad
\forall \psi_h,\phi_h \in V_h.
\end{align*}

We then consider the following numerical approximation
of the solution $u$ to~\eqref{e:Lbeta}
\begin{align}\label{e:uQh}
    u_{h,k}^{Q} := Q_{h,k}^\beta(\proj g + \white_h^\Phi)
\end{align}
proposed in~\cite[Eq.~(2.18)]{bkk17}.
It is  based on the following two components:
\begin{itemize}
\item[(a)] The operator $Q_{h,k}^\beta$ is the quadrature approximation for $L_h^{-\beta}$ of~\cite{bonito2015}:
    \begin{align}\label{e:def:Qhk}
        Q^\beta_{h,k} := \frac{2 k \sin(\pi\beta)}{\pi} \sum_{\ell=-K^{-}}^{K^{+}} e^{2\beta y_\ell} \left(\mathrm{Id}_{V_h} + e^{2 y_\ell} L_h\right)^{-1}.
    \end{align}
    The quadrature nodes
    $\{y_\ell = \ell k : \ell\in\bbZ, -K^{-} \leq \ell \leq K^+\}$
    are equidistant with distance $k>0$ and we set
    $K^- := \bigl\lceil \tfrac{\pi^2}{4\beta k^2} \bigr\rceil$
    and $K^+ := \bigl\lceil \frac{\pi^2}{4(1-\beta)k^2} \bigr\rceil$.
\item[(b)] The white noise $\white$ in $H$ is approximated by
    the square-integrable \linebreak $V_h$-valued random variable $\white_h^\Phi$ 
    given by $\white_{h}^{\Phi} := \sum_{j=1}^{N_h} \xi_j \, \phi_{j,h}$,
    where \linebreak $\Phi:=\{\phi_{j,h}\}_{j=1}^{N_h}$ is any basis
    of the finite element space $V_h$.
    The vector $\xivec = (\xi_1,\ldots, \xi_{N_h})^\T$ is multivariate Gaussian distributed
    with mean zero and covariance matrix $\Mmat^{-1}$,
    where $\Mmat$ denotes the mass matrix with respect to the basis $\Phi$,
    i.e., $M_{ij} = \scalar{\phi_{i,h}, \phi_{j,h}}{H}$.
\end{itemize}

The main outcome of~\cite{bkk17} is strong convergence 
of the approximation $u_{h,k}^{Q}$ in~\eqref{e:uQh} to the solution
$u$ of~\eqref{e:Lbeta} at an explicit rate.
Subsequently, this work focusses on weak approximations based on $u_{h,k}^{Q}$, i.e.,
we investigate the error
\begin{align}\label{e:def:err-weak}
    \bigl| \bbE[\varphi(u)] - \bbE[\varphi(u_{h,k}^Q)] \bigr|
\end{align}
for continuous functions $\varphi\from H \to \bbR$.

\begin{remark}
In practice, the expected value $\bbE[ \varphi(u_{h,k}^Q) ]$
is approximated, e.g., by a Monte Carlo method.
For this, usually a large number of realizations of $\varphi(u_{h,k}^Q)$ and, thus,
of the approximation $u_{h,k}^Q$ in~\eqref{e:uQh} is needed.
Each of them requires a sample of the load vector $\bvec$ with
entries $b_j := \scalar{\proj g + \white_h^\Phi, \phi_{j,h}}{H}$.
As pointed out in~\cite[Rem.~2.9]{bkk17}, this is computationally feasible
if the mass matrix $\Mmat$ with respect to the finite element basis $\Phi$
is sparse, since the distribution of $\xivec \sim \cN(\mathbf{0},\Mmat^{-1})$ implies that
\begin{align*}
    \bvec \sim \cN(\gvec, \Mmat),
    \qquad
    \bvec \overset{d}{=} \gvec + \Gmat \zvec,
\end{align*}
where $\zvec \sim \cN(\mathbf{0},\Imat)$,
$\Gmat$ is the Cholesky factor of $\Mmat = \Gmat \Gmat^\T$,
and the vector $\gvec$ has entries
$g_j := \scalar{g, \phi_{j,h}}{H}$.
\end{remark}

\subsection{Weak convergence}\label{subsec:weak-conv}

For bounding the error in~\eqref{e:def:err-weak},
we start by introducing some more notation and assumptions.
Let $\cE := \{e_{j,h}\}_{j=1}^{N_h} \subset V_h$
be the $H$-orthonormal eigenvectors
of the discrete operator $L_h$ with
corresponding eigenvalues $\{\lambda_{j,h}\}_{j=1}^{N_h}$
listed in nondecreasing order.
In addition,
the strongly continuous semigroup on $V_h$
generated by $-L_h$ is denoted by $(S_h(t))_{t\geq 0}$.

We define the space $C^2(H;\bbR)$
of twice continuously Fr\'{e}chet differentiable
functions $\varphi\from H \to \bbR$, i.e.,
$\varphi\in C^2(H;\bbR)$ if and only if
\begin{align*}
    \varphi \in C(H;\bbR),
    \quad
    D\varphi \in C(H;H),
    \quad
    \text{and}
    \quad
    D^2 \varphi \in C(H;\cL(H)) .
\end{align*}
Here and below,
using the Riesz representation theorem,
we identify the first
two Fr\'echet derivatives $D\varphi$ and $D^2 \varphi$
of $\varphi$ with functions taking values in $H$
and in $\cL(H)$, respectively.
Furthermore, we say that the second derivative
has polynomial growth of degree $p\in\bbN$, if there
exists a constant $K>0$ such that
\begin{align}\label{e:ass:phi-poly}
    \norm{D^2 \varphi (\psi)}{\cL(H)} \leq K \left( 1 + \norm{\psi}{H}^p \right)
    \quad
    \forall\psi\in H.
\end{align}

All the properties
of the finite element discretization,
of the operator $L$,
and of the function $\varphi$,
which are of importance for our analysis
of the weak error~\eqref{e:def:err-weak},
are summarized in the assumption below.
\begin{assumption}\label{ass:all}
The finite element spaces $(V_h)_{h\in(0,1)} \subset \Hdot{1}$,
the operator $L$ in~\eqref{e:Lbeta}, and
the function $\varphi\from H \to \bbR$ in~\eqref{e:def:err-weak}
satisfy the following:
\begin{enumerate}[label=(\roman*)]
\item\label{ass:Vh-1} there exists $d\in\bbN$ such that $N_h = \dim(V_h) \propto h^{-d}$ for all $h > 0$;
\item\label{ass:Vh-2} there exist constants $C_1, C_2 > 0$, $h_0\in(0,1)$,
    as well as exponents $r,s > 0$ and $q > 1$
    such that
    \begin{align*}
        \lambda_j \leq \lambda_{j,h} &\leq \lambda_j + C_1 h^r \lambda_j^q, \\
        \norm{e_j - e_{j,h}}{H}^2    &\leq C_2 h^{2s} \lambda_j^q,
    \end{align*}
    for all $h\in(0,h_0)$ and $j\in\{1,\ldots,N_h\}$;
\item\label{ass:L} the eigenvalues of $L$ satisfy \eqref{e:ass:lambdaj}
    for an exponent $\alpha$ with
    \begin{align*}
        \tfrac{1}{2\beta}
        <
        \alpha
        \leq
        \min\left\{ \tfrac{r}{(q-1)d}, \tfrac{2s}{q d} \right\},
    \end{align*}
    where the values of
    $d\in\bbN$, $r,s>0$, and $q>1$
    are the same as in~\ref{ass:Vh-1}--\ref{ass:Vh-2};
\item\label{ass:Sh}
    $s>2\beta$ and for $0\leq\theta\leq\sigma\leq s$
    there exists a constant $C_3 > 0$ such that
    \begin{align*}
        \norm{(S(t)-S_h(t)\proj)g}{H} \leq C_3 h^\sigma t^{\frac{\theta-\sigma}{2}} \norm{g}{\theta}
        \quad
        \forall t>0,
    \end{align*}
    for every $g\in\Hdot{\theta}$ and $h\in(0,h_0)$.
    Here, $h_0$ and $s$ are as in~\ref{ass:Vh-2};
\item\label{ass:phi} $\varphi \in C^2(H;\bbR)$ and $D^2 \varphi$ has
    polynomial growth~\eqref{e:ass:phi-poly} of degree $p\geq 2$.
\end{enumerate}
\end{assumption}

The following example shows that
Assumptions \ref{ass:all}\ref{ass:Vh-1}--\ref{ass:Sh}
are satisfied, e.g.,
for the motivating problem~\eqref{e:matern}
related to approximations of Mat\'ern fields, if $\beta > d/4$,
when using continuous piecewise linear
finite element bases.

\begin{example}\label{ex:matern}
For $\kappa \geq 0$ and a bounded, convex, polygonal domain $\cD\subset\bbR^d$,
consider the stochastic model problem~\eqref{e:matern}, 
i.e., the fractional order equation \eqref{e:Lbeta} for $g=0$ and 
$L = \kappa^2 - \Delta$
on $H=L_2(\cD)$.
Furthermore, we assume that
the differential operator~$L$ is augmented
with homogeneous Dirichlet boundary conditions on $\partial\cD$.
In this case, the eigenvalues $\{\lambda_j\}_{j\in\bbN}$ of $L$
satisfy~\eqref{e:ass:lambdaj} for $\alpha = 2/d$
(see \cite[Ch.~VI.4]{courant1962} for $\cD=(0,1)^d$,
the result for more general domains as above follows from the min-max principle).
Consequently, the first inequality of Assumption~\ref{ass:all}\ref{ass:L} holds
if $\beta > d/4$.

In addition,
if $(V_h)_{h\in(0,1)} \subset \Hdot{1} = H_0^1(\cD)$
are finite element spaces
with continuous piecewise linear basis functions
defined with respect to a quasi-uniform family of triangulations, 
Assumption~\ref{ass:all}\ref{ass:Vh-1} holds
and
Assumptions~\ref{ass:all}\ref{ass:Vh-2}, \ref{ass:Sh} are satisfied
for $r=s=q=2$,
see \cite[Thm.~6.1, Thm.~6.2]{strang2008}
and \cite[Thm.~3.5]{thomee2007}.
Thus, 
\begin{align*}
s = 2 > 2\beta,
\qquad
\alpha = \tfrac{2}{d}
        =
        \min\left\{ \tfrac{r}{(q-1)d}, \tfrac{2s}{q d} \right\},
\end{align*}
and
Assumptions \ref{ass:all}\ref{ass:Vh-1}--\ref{ass:Sh} hold
for all $\beta\in(d/4,1)$.
\end{example}

We remark that Assumptions~\ref{ass:all}\ref{ass:Vh-1}--\ref{ass:L} coincide
with those of~\cite{bkk17}. 
The strong $L_2(\Omega;H)$-convergence rate
\begin{align}\label{e:rate-strong}
    \min\{d(\alpha\beta - 1/2),r,s\}
\end{align}
was derived in~\cite[Thm.~2.10]{bkk17}
for the approximation $u_{h,k}^{Q}$ in~\eqref{e:uQh}
under a suitable calibration of the distance of the quadrature nodes $k$
with the finite element mesh size~$h$.
Furthermore, a bound for the weak-type error
\begin{align*}
    \bigl| \norm{u}{L_2(\Omega;H)}^2 - \norm{u_{h,k}^Q}{L_2(\Omega;H)}^2 \bigr|
\end{align*}
was provided, showing convergence to zero
with the rate $\min\{d(2\alpha\beta - 1),r,s\}$,
see \cite[Cor.~3.4]{bkk17}.
In particular, the term $d(2\alpha\beta - 1)$
stemming from the stochasticity is doubled
compared to the strong rate in~\eqref{e:rate-strong}.

In the following, we generalize this
result to weak errors of the form~\eqref{e:def:err-weak}
for functions $\varphi\from H \to \bbR$,
which are twice continuously Fr\'{e}chet differentiable
and have a second derivative of polynomial growth.
The bound of the weak error
in Theorem~\ref{thm:weak-conv}
is our main result.

\begin{theorem}\label{thm:weak-conv}
Let Assumption~\ref{ass:all} be satisfied.
Let $\theta > \min\{d(2\alpha\beta-1),s\} - 2\beta$,
if $d(2\alpha\beta-1) \geq 2\beta$,
and set $\theta = 0$ otherwise. %
Then, for $g\in\Hdot{\theta}$ and for
sufficiently small $h\in(0,h_0)$ and $k\in(0,k_0)$,
the weak error in~\eqref{e:def:err-weak}
admits the bound
\begin{align}
    \bigl| \bbE[\varphi(u)] - \bbE[\varphi(u_{h,k}^{Q})] \bigr|
         &\leq
            C \left( h^{\min\{d(2\alpha\beta-1),r,s\}} + e^{-\frac{\pi^2}{k}} h^{-d}
                + e^{-\frac{\pi^2}{2k}} + e^{-\frac{\pi^2}{2k}} f_{\alpha,\beta}(h) \right) \notag \\
            &\quad \times (1 + e^{-\frac{p\pi^2}{2k}} h^{-\frac{pd}{2}} + \norm{g}{H}^{p+1} ) (1 + \norm{g}{\theta} ).
            \label{e:err-weak}
\end{align}
Here, we set $f_{\alpha,\beta}(h) := h^{d(\alpha\beta - 1)}$, if $\alpha\beta\neq 1$, and
$f_{\alpha,\beta}(h) := |\ln(h)|$, if $\alpha\beta = 1$.
The constant $C>0$ is independent of $h$ and $k$
and the values of $\alpha,r,s > 0$, $d\in\bbN$, and $p\in\{2,3,\ldots\}$ are those of
Assumption~\ref{ass:all}.
\end{theorem}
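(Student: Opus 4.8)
The plan is to bound the weak error by introducing two auxiliary time-dependent stochastic processes whose laws at $t=1$ coincide with those of $u$ and $u_{h,k}^Q$, and then to compare them via a Kolmogorov backward equation and It\^{o} calculus, as the authors indicate in the introduction. The key structural idea is that the solution $u = L^{-\beta}(g + \white)$ is Gaussian, so its law is determined by its mean $L^{-\beta}g$ and covariance operator $L^{-2\beta}$; similarly $u_{h,k}^Q = Q_{h,k}^\beta(\proj g + \white_h^\Phi)$ has an explicit mean and covariance. I would therefore define, for $t\in[0,1]$, a process $X(t)$ that interpolates between a known reference distribution at $t=0$ and the law of $u$ at $t=1$ (for instance by scaling the covariance and drift linearly in $t$, or by running an Ornstein--Uhlenbeck-type flow), and an analogous discrete process $X_{h,k}(t)$ built from the operators $Q_{h,k}^\beta$ and $\white_h^\Phi$. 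One clean choice is to take both processes to be Gaussian with the same ``shape'' but with the exact versus approximate covariance and mean plugged in, so that $X(1)\overset{d}{=}u$ and $X_{h,k}(1)\overset{d}{=}u_{h,k}^Q$.

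\emph{Next I would} set up the backward Kolmogorov equation. Let $V(t,x):=\bbE[\varphi(X^x(t))]$ where $X^x$ starts the exact-law flow from $x$; then $u(t,x):=\bbE[\varphi(X(1)\mid X(t)=x)]$ solves a backward parabolic equation $\partial_t u + \cL u = 0$ with terminal condition $u(1,\cdot)=\varphi$, where $\cL$ is the generator of the exact flow. Applying It\^{o}'s formula to $t\mapsto u(t,X_{h,k}(t))$ along the \emph{discrete} process and taking expectations, the terminal and initial values telescope so that
\begin{align*}
    \bbE[\varphi(u)] - \bbE[\varphi(u_{h,k}^Q)]
    = \int_0^1 \bbE\bigl[ (\cL - \cL_{h,k}) u(t, X_{h,k}(t)) \bigr] \rd t ,
\end{align*}
where $\cL_{h,k}$ is the generator of the discrete flow. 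Because both flows are Gaussian and linear, the difference $\cL - \cL_{h,k}$ acts only through the first derivative $Du$ (the drift/mean mismatch) and the second derivative $D^2u$ (the covariance mismatch). This is precisely where the polynomial-growth hypothesis~\eqref{e:ass:phi-poly} on $D^2\varphi$ enters: it lets me control $\norm{D^2 u(t,\cdot)}{\cL(H)}$ by $K(1+\norm{\cdot}{H}^p)$, and the linearity of the flow transports this bound to $\norm{D^2 u(t,x)}{\cL(H)}\lesssim 1+\norm{x}{H}^p$, with the $p$-th moments $\bbE\norm{X_{h,k}(t)}{H}^p$ furnishing the factors $(1+e^{-p\pi^2/(2k)}h^{-pd/2}+\norm{g}{H}^{p+1})$ in~\eqref{e:err-weak}.

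\emph{Then I would} estimate the two operator-difference terms separately. The drift term reduces to bounding $\norm{L^{-\beta}g - Q_{h,k}^\beta\proj g}{H}$, a purely deterministic quantity controlled using Assumption~\ref{ass:all}\ref{ass:Sh} and the Dunford--Taylor quadrature error of~\cite{bonito2015}; this is where the factors $e^{-\pi^2/k}h^{-d}$, $e^{-\pi^2/(2k)}$, and $e^{-\pi^2/(2k)}f_{\alpha,\beta}(h)$ originate, and where the regularity assumption $g\in\Hdot{\theta}$ with the stated threshold on $\theta$ is consumed (the factor $(1+\norm{g}{\theta})$). The covariance term, by contrast, is what doubles the stochastic rate: since $\cL-\cL_{h,k}$ contributes through $D^2u$ contracted against the \emph{difference of covariance operators} $L^{-2\beta}-Q_{h,k}^\beta R_{h,k}Q_{h,k}^{\beta*}$ (with $R_{h,k}$ the covariance of $\white_h^\Phi$), and because $D^2$ is a second-order object, the error is measured in a trace-type norm of this covariance difference rather than in the square root of a strong $L_2(\Omega;H)$ error. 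Bounding $\tr$ or $\norm{\cdot}{\cL(H)}$ of the covariance difference against $h^{\min\{d(2\alpha\beta-1),s\}}$ is exactly the weak-type estimate already achieved in~\cite[Cor.~3.4]{bkk17}, now needing an $\cL(H)$-operator-norm rather than a trace-norm version.

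\textbf{The hard part} will be making the Kolmogorov/It\^{o} apparatus rigorous in the infinite-dimensional, fractional, and merely $C^2$ setting. Specifically: (i) justifying that $u(t,x)=\bbE[\varphi(X(1)\mid X(t)=x)]$ is twice Fr\'echet-differentiable with the polynomial-growth second derivative bound uniform in $t\in[0,1]$, given only $\varphi\in C^2(H;\bbR)$ with growth~\eqref{e:ass:phi-poly}; (ii) verifying the It\^{o} formula applies to the non-Markovian or singular covariance flows arising from the fractional operator $L^{-\beta}$ and from the non-standard covariance $\Mmat^{-1}$ of $\xivec$; and (iii) interchanging the generator difference with the expectation so that the operator-norm bounds on $D^2u$ combine multiplicatively with the deterministic quadrature and finite-element estimates to yield the product structure in~\eqref{e:err-weak}. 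I expect controlling the $\cL(H)$-norm of the covariance difference (as opposed to the trace norm from~\cite{bkk17}) while retaining the doubled exponent $d(2\alpha\beta-1)$ to require the most delicate spectral bookkeeping, splitting the eigenvalue sum into a finite-element-resolved regime and a tail, and invoking Assumption~\ref{ass:all}\ref{ass:Vh-2}--\ref{ass:Sh} eigenvalue-by-eigenvalue.
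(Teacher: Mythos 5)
Your proposal follows essentially the same route as the paper: two auxiliary Gaussian processes with the laws of $u$ and $u_{h,k}^{Q}$ at $t=1$, the Kolmogorov backward equation $w_t+\tfrac12\tr(w_{xx}L^{-2\beta})=0$, It\^{o}'s formula applied along the discrete flow, a splitting into a mean-mismatch term (controlled by the deterministic finite element and quadrature errors) and a covariance-mismatch term (a weighted trace estimate yielding the doubled rate $d(2\alpha\beta-1)$), with the polynomial growth of $D^2\varphi$ absorbed by moment bounds on the two processes. The only slip is that the initial values do not telescope --- the paper's flows start at $L^{-\beta}g$ and $Q_{h,k}^{\beta}\Pi_h g$ respectively, so the boundary term $w(0,L^{-\beta}g)-w(0,Q_{h,k}^{\beta}\Pi_h g)$ survives --- but since you bound exactly this mean mismatch anyway, this is a bookkeeping difference rather than a gap.
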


\begin{remark}\label{remark:calibration}
In the derivation of the strong convergence
rate~\eqref{e:rate-strong}, 
we balanced the error terms caused by the quadrature
and by the finite element method by choosing
the quadrature step size $k$ sufficiently small
with respect to the finite element mesh width~$h$,
namely $e^{-\pi^2/(2k)} \propto h^{d\alpha\beta}$, see~\cite[Table~1]{bkk17}.

For calibrating the terms in the weak error estimate~\eqref{e:err-weak},
we distinguish the cases $\alpha\beta < 1$, $\alpha\beta = 1$, and $\alpha\beta > 1$. 
If $\alpha\beta < 1$, then $d\alpha\beta > d(2\alpha\beta-1)$
and we let $k>0$ be such that
$e^{-\pi^2/(2k)} \propto h^{d\alpha\beta}$.
With this choice, the error estimate~\eqref{e:err-weak} simplifies to
\begin{align*}
        \bigl| \bbE[\varphi(u)] - \bbE[\varphi(u_{h,k}^{Q})] \bigr|
         \leq C
             h^{\min\{d(2\alpha\beta-1),r,s\}}  (1 + \norm{g}{H}^{p+1} ) ( 1 + \norm{g}{\theta}).
\end{align*}
For $\alpha\beta > 1$ ($\alpha\beta=1$)  
we achieve the same bound if $k$ and $h$ are calibrated such that
$e^{-\pi^2/(2k)} \propto h^{d(2\alpha\beta-1)}$ ($e^{-\pi^2/(2k)} \max\{1,|\ln(h)|\} \propto h^d$).
Note that the calibration for $\alpha\beta < 1$
coincides with the one for the strong error
and that the term $d(2\alpha\beta-1)$ in the derived
weak convergence rate
$\min\{d(2\alpha\beta-1),r,s\}$
is doubled compared to the first term of the
strong convergence rate~\eqref{e:rate-strong}.
\end{remark}

\begin{remark}
We emphasize that (under the same assumptions)
both the strong and weak convergence rates
remain the same when approximating the solution $u$ to
\[
    L^\beta u = \sigma( g + \white)
\]
by $u_{h,k}^Q := \sigma \, Q_{h,k}^\beta (\Pi_h g + \white_h^\Phi)$,
where $\sigma > 0$ is a constant parameter which scales
the variance of $u$.
This can be seen from the equality
$\sigma^{-1} L^\beta = L_\sigma^{\beta}$ for $L_\sigma := \sigma^{-1/\beta} L$,
combined with the fact that the eigenvalues of the operator $L_\sigma$
satisfy the growth assumption~\eqref{e:ass:lambdaj}
with the same exponent $\alpha > 0$ as the eigenvalues of $L$.

However, the constants
$c_\lambda, C_\lambda > 0$
in~\eqref{e:ass:lambdaj}
and the constants in the error estimates change.
For instance, if $\varphi(u) := \norm{u}{H}^{p_{*}}$ for $p_{*}\in\bbN$,
then the constant $C>0$ in~\eqref{e:err-weak} will depend linearly on $\sigma^{p_{*}}$.

Note that one has to consider
a problem of the form
\[
    (\kappa^2 - \Delta)^\beta u = \sigma \white
    \quad
    \text{for}
    \quad
    \sigma := \sigma_{*} (4\pi)^{\frac{d}{4}} \kappa^{2\beta-\frac{d}{2}} \sqrt{\tfrac{\Gamma(2\beta)}{\Gamma(2\beta-d/2)}}
\]
when approximating a Mat\'ern field
with variance $\sigma_{*}^2$.
Here and in what follows, $\Gamma(\,\cdot\,)$
denotes the Gamma function.
\end{remark}

\begin{remark}\label{remark:rational}
We also comment on how the error bound in~\eqref{e:err-weak}
changes if instead of the family $(Q_{h,k}^\beta)_{k>0}$
a different sequence of approximations
$\{R_{h,n}^\beta\}_{n\in\bbN}$ of $L_h^{-\beta}$
is used.
If there exists a function $E\from \bbN \to \bbR_{\geq 0}$
such that $\lim_{n\to\infty} E(n) = 0$ as well as
a constant $C>0$, independent of $h$ and $n$,
such that
\begin{align*}
    \norm{(L_h^{-\beta} - R_{h,n}^\beta)\phi_h}{H}
        \leq C E(n) \norm{\phi_h}{H}
        \quad
        \forall \phi_h \in V_h,
\end{align*}
it is an immediate consequence of the arguments in our proof that
a bound of the weak error for the approximation
$u^R_{h,n} := R_{h,n}^\beta(\proj g + \white_h^\Phi)$
is given by
\begin{align*}
    \bigl| \bbE[\varphi(u)] - \bbE[\varphi(u_{h,k}^{Q})] \bigr|
         &\leq
            C \left( h^{\min\{d(2\alpha\beta-1),r,s\}} + E(n)^2 h^{-d} + E(n) + E(n) f_{\alpha,\beta}(h) \right)  \\
         &\quad \times (1 + E(n)^p h^{-\frac{pd}{2}} + \norm{g}{H}^{p+1} ) (1 + \norm{g}{\theta} ).
\end{align*}
An example of such a family $\{R_{h,n}^\beta\}_{n\in\bbN}$
are the approximations of $L_h^{-\beta}$
proposed in \cite{bolin2017fractional},
which are based on rational approximations
of the function $x^{-\beta}$ of different degrees $n\in\bbN$.
%
\end{remark}

\section{The derivation of Theorem~\ref{thm:weak-conv}}\label{sec:derivation}

The main idea in our derivation of the weak error estimate~\eqref{e:err-weak}
is to introduce two time-dependent stochastic processes
with the property that their
(random) values at time $t=1$
have the same distribution as
the solution
$u$ to~\eqref{e:Lbeta}
and its approximation~$u_{h,k}^Q$ in~\eqref{e:uQh}, respectively.
We then use an associated Kolmogorov backward equation and
It\^{o} calculus to estimate the difference
between these values.

\subsection{The extension to time-dependent processes}\label{subsec:time}

Recall the eigenvalue-eigen- vector pairs
$\{ (\lambda_j, e_j) \}_{j\in\bbN}$ of $L$
as well as the parameter $\alpha>0$
determining the growth of the eigenvalues
via~\eqref{e:ass:lambdaj}.
In what follows, we assume that $g\in H$
and $2\alpha\beta > 1$ so that the solution $u$
to~\eqref{e:Lbeta} satisfies $u\in L_2(\Omega; H)$.
With the aim of introducing the time-dependent processes mentioned above,
we start by defining
\begin{align*}
    W^{\beta}(t) := \sum_{j\in\bbN} \lambda_j^{-\beta} B_j(t) \, e_j,
    \quad
    t\geq 0,
\end{align*}
where $\{B_j\}_{j \in \bbN}$ is a sequence of independent real-valued Brownian motions
adapted to a filtration $\cF := (\cF_t, \ t\geq 0)$.
Owing to this construction, $(W^{\beta}(t), \ t\geq 0)$ is
an $\cF$-adapted $H$-valued Wiener process with covariance operator
$L^{-2\beta}$, which is of trace-class if $2\alpha\beta > 1$.
Since the random variables $\{B_j(1)\}_{j \in \bbN}$
are independent and identically $\cN(0,1)$-distributed,
the spatial white noise $\white$ satisfies
\begin{align*}
    \white \overset{d}{=} \sum_{j\in\bbN} B_j(1) \, e_j
    \quad
    \text{in }H.
\end{align*}

The stochastic process $Y := (Y(t), \ t \in [0,1])$ defined as
the (strong) solution to the stochastic partial differential equation
\begin{align}\label{e:spde-Y}
    \mathrm{d} Y(t) = \mathrm{d} W^{\beta}(t),
    \quad
    t \in [0,1],
    \qquad
    Y(0) = L^{-\beta} g,
\end{align}
therefore takes the following random value in $H$ at time $t=1$,
\begin{align}\label{e:u-Y}
    Y(1) = Y(0) + \int_0^1 \mathrm{d} W^{\beta}(t) = L^{-\beta} g + W^{\beta} (1) \overset{d}{=} L^{-\beta} (g + \white) = u.
\end{align}
%

Its Gaussian distribution
implies the existence of all moments,
as shown in the following lemma.

\begin{lemma}\label{l:Ymoments}
Let $p\in \bbN$, $t\in[0,1]$, and $Y$
be the strong solution
of~\eqref{e:spde-Y}.
Then the $p$-th moment of $Y(t)$
exists and, for $p \geq 2$,
it admits the following bound:
\begin{align}\label{e:Lp-Y}
    \bbE\left[ \norm{Y(t)}{H}^p \right]
        &\leq 2^{p-1} \left( \norm{g}{-2\beta}^p + t^{\frac{p}{2}} \mu_p \tr (L^{-2\beta})^{\frac{p}{2}}  \right).
\end{align}
Here, $\mu_p := \bbE[ |Z|^p ]
                    = \sqrt{\frac{2^p}{\pi}} \, \Gamma\left(\frac{p+1}{2}\right)$
is the $p$-th absolute moment
of $Z\sim\cN(0,1)$.
\end{lemma}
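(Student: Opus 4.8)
The plan is to exploit the explicit Gaussian structure of $Y(t)$ given by the strong solution formula~\eqref{e:u-Y}. Specifically, from~\eqref{e:spde-Y} we have $Y(t) = L^{-\beta} g + W^\beta(t)$, where $W^\beta(t)$ is a centred $H$-valued Gaussian random variable. The mean term $L^{-\beta}g$ lies in $H$ with $\norm{L^{-\beta}g}{H} = \norm{g}{-2\beta}$ (using that $L^\beta\from\Hdot{r}\to\Hdot{r-2\beta}$ is an isometric isomorphism), so the first contribution on the right-hand side of~\eqref{e:Lp-Y} is immediate. The existence of all moments then follows from the fact that $W^\beta(t)$ is Gaussian with a trace-class covariance operator $t\,L^{-2\beta}$, which is finite precisely because $2\alpha\beta > 1$ guarantees $\tr(L^{-2\beta}) = \sum_{j\in\bbN}\lambda_j^{-2\beta} < \infty$ via the eigenvalue growth~\eqref{e:ass:lambdaj}.

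First I would split using the triangle inequality and the elementary bound $(a+b)^p \leq 2^{p-1}(a^p + b^p)$ for $a,b \geq 0$, writing
\begin{align*}
    \bbE\left[\norm{Y(t)}{H}^p\right]
        \leq 2^{p-1}\left( \norm{L^{-\beta}g}{H}^p + \bbE\left[\norm{W^\beta(t)}{H}^p\right] \right).
\end{align*}
This isolates the deterministic term (already identified as $\norm{g}{-2\beta}^p$) from the stochastic term. The remaining task is to bound the $p$-th moment of the $H$-norm of the centred Gaussian vector $W^\beta(t) = \sum_{j\in\bbN}\lambda_j^{-\beta} B_j(t)\,e_j$.

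The core step is estimating $\bbE[\norm{W^\beta(t)}{H}^p]$. I would expand $\norm{W^\beta(t)}{H}^2 = \sum_{j\in\bbN}\lambda_j^{-2\beta} B_j(t)^2 = t\sum_{j\in\bbN}\lambda_j^{-2\beta} Z_j^2$, where $Z_j := B_j(t)/\sqrt{t}$ are i.i.d.\ standard normal. Writing $\norm{W^\beta(t)}{H}^p = t^{p/2}\bigl(\sum_j \lambda_j^{-2\beta} Z_j^2\bigr)^{p/2}$, the factor $t^{p/2}$ appears as desired. To produce the stated bound involving $\mu_p\,\tr(L^{-2\beta})^{p/2}$, I would normalise: set $w_j := \lambda_j^{-2\beta}/\tr(L^{-2\beta})$ so that $\sum_j w_j = 1$, giving $\bigl(\sum_j\lambda_j^{-2\beta}Z_j^2\bigr)^{p/2} = \tr(L^{-2\beta})^{p/2}\bigl(\sum_j w_j Z_j^2\bigr)^{p/2}$. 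Since $x\mapsto x^{p/2}$ is convex for $p\geq 2$, Jensen's inequality applied to the convex combination $\sum_j w_j Z_j^2$ yields $\bigl(\sum_j w_j Z_j^2\bigr)^{p/2} \leq \sum_j w_j |Z_j|^p$, and taking expectations gives $\bbE[\cdots] \leq \sum_j w_j\,\bbE[|Z_j|^p] = \mu_p$. Combining these factors delivers $\bbE[\norm{W^\beta(t)}{H}^p] \leq t^{p/2}\mu_p\,\tr(L^{-2\beta})^{p/2}$, completing the proof.

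The main obstacle is the moment bound for the infinite-dimensional Gaussian term: one must reduce the expectation of a power of an infinite quadratic form in i.i.d.\ Gaussians to the single scalar moment $\mu_p$. The Jensen/convexity argument above is the clean route, but one must be careful to justify the interchange of expectation and the (a.s.\ convergent) infinite sum, which is guaranteed by monotone convergence and the trace-class property $\tr(L^{-2\beta}) < \infty$. An alternative would be a Fernique-type or hypercontractivity argument showing all Gaussian moments are comparable to the second moment, but the explicit diagonal structure here makes the direct convexity estimate both sharper and simpler.
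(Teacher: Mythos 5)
Your proposal is correct and follows essentially the same route as the paper: split off the deterministic term via $(a+b)^p\leq 2^{p-1}(a^p+b^p)$, identify $\norm{L^{-\beta}g}{H}=\norm{g}{-2\beta}$, and bound $\bbE[\norm{W^\beta(t)}{H}^p]$ by Jensen's inequality applied to the quadratic form $\sum_j\lambda_j^{-2\beta}|B_j(t)|^2$ (your normalised weights $w_j$ are just the explicit form of the paper's factor $|\sum_j\lambda_j^{-2\beta}|^{p/2-1}$). The only cosmetic difference is that the paper treats $p=2$ separately via the It\^{o} isometry, whereas your convexity argument covers all $p\geq 2$ uniformly; both yield the stated bound.
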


\begin{proof}
For $p=2$, the bound in~\eqref{e:Lp-Y}
follows from the It\^{o} isometry \cite[Thm.~8.7(i)]{peszat2007}:
\begin{align*}
    \bbE\left[ \norm{Y(t)}{H}^2 \right]
        &= \norm{L^{-\beta}g}{H}^2 + \int_0^t \tr(L^{-2\beta}) \rd s = \norm{g}{-2\beta}^2 + t \mu_2 \tr(L^{-2\beta}).
\end{align*}

If $p\geq 3$,  we estimate
$\bbE[ \norm{Y(t)}{H}^p ]
\leq 2^{p-1} (\norm{L^{-\beta}g}{H}^p + \bbE[ \norm{W^\beta(t)}{H}^p ])$.
By Jensen's inequality we have
\begin{align*}
     \bbE[ \norm{W^\beta(t)}{H}^p ] = \bbE \Bigl| \sum_{j\in\bbN} \lambda_j^{-2\beta} |B_j(t)|^2 \Bigr|^{\frac{p}{2}}
        \leq \bbE\Biggl[ \Bigl| \sum_{j\in\bbN} \lambda_j^{-2\beta} \Bigr|^{\frac{p}{2}-1}
            \sum_{j\in\bbN} \lambda_j^{-2\beta} |B_j(t)|^p \Biggr].
\end{align*}
Thus, the distribution of $\{B_j(t)\}_{j\in\bbN}$ implies that
$\bbE[ \norm{W^\beta(t)}{H}^p ] \leq t^{p/2} \mu_p \tr( L^{-2\beta} )^{p/2}$
and assertion~\eqref{e:Lp-Y} follows.
\end{proof}

In order to define a another stochastic process
$\widetilde{Y} := (\widetilde{Y}(t), \ t \in [0,1])$
with the property $\widetilde{Y}(1) \overset{d}{=} u_{h,k}^Q$ in $H$,
we recall the orthonormal eigenbasis $\cE = \{e_{j,h}\}_{j=1}^{N_h} \subset V_h$ of $L_h$ and define $\Pbeta\from H \to V_h$ for $\beta\in(0,1)$ by
\begin{align}\label{e:Pbeta}
    \Pbeta g := \sum_{j=1}^{N_h} \lambda_j^\beta \scalar{g,e_j}{H} \, e_{j,h}.
\end{align}

Since $V_h$ is finite-dimensional, the operator $Q_{h,k}^\beta \from V_h \to V_h$ in~\eqref{e:def:Qhk}
is bounded, $Q_{h,k}^\beta \in \cL(V_h)$ for short, with norm
\begin{align*}
    \norm{Q_{h,k}^\beta}{\cL(V_h)} := \sup_{\psi_h\in V_h\setminus\{0\}} \frac{\norm{Q_{h,k}^\beta \psi_h}{H}}{\norm{\psi_h}{H}} < \infty.
\end{align*}

We now consider the following stochastic partial differential equation
\begin{align}
    \mathrm{d} \widetilde{Y}(t) = Q_{h,k}^\beta \Pbeta \, \mathrm{d} W^{\beta}(t),
    \quad
    t \in [0,1],
    \qquad
    \widetilde{Y}(0) = Q_{h,k}^\beta \proj g. \label{e:spde-Ytilde}
\end{align}
Note that the reproducing kernel Hilbert space
of $W^\beta$ is $\Hdot{2\beta}$.
The finite rank of
the operator
$Q_{h,k}^{\beta} \Pbeta \from H \to V_h$ 
implies that it
is a Hilbert--Schmidt operator
from $\Hdot{2\beta}$ to $H$.
For this reason, existence
and uniqueness of a (strong) solution
$\widetilde{Y}$ to~\eqref{e:spde-Ytilde}
is evident.
Furthermore,
the solution process
$\widetilde{Y}$ satisfies
\begin{align*}
    \widetilde{Y}(1) &= \widetilde{Y}(0) + \int_0^1 Q_{h,k}^\beta \Pbeta \rd W^{\beta}(t)
        = Q_{h,k}^\beta (\proj g + \white_h^\cE),
\end{align*}
where $\white_h^\cE := \sum_{j=1}^{N_h} B_j(1) \, e_{j,h}$.
To see that also
$\widetilde{Y}(1) \overset{d}{=} u_{h,k}^Q$ holds in $H$,
define the deterministic matrix $\Rmat$
and the random vector $\Bmat_1$ by
\begin{align*}
    R_{ij} := \scalar{e_{i,h}, \phi_{j,h}}{H},
    \quad
    1\leq i,j \leq N_h,
    \qquad
    \Bmat_1 := (B_1(1), \ldots, B_{N_h}(1))^\T,
\end{align*}
i.e., $\Bmat_1$ is the vector
of the first $N_h$ Brownian motions at time $t=1$.
Due to
\begin{align*}
    (\Rmat^\T \Rmat)_{ij} = \scalar{\phi_{i,h}, \phi_{j,h}}{H} = M_{ij},
\end{align*}
the vector $\xivec := \Rmat^{-1} \Bmat_1$ is $\cN(\mathbf{0},\Mmat^{-1})$-distributed.
In addition, by~\cite[Lem.~2.8]{bkk17} the $V_h$-valued random variables
\begin{align*}
    \white_h^\cE = \sum_{j=1}^{N_h} B_j(1) \, e_{j,h}
    \quad
    \text{and}
    \quad
    \white_h^\Phi := \sum_{j=1}^{N_h} \xi_j \, \phi_{j,h}
\end{align*}
are equal in $L_2(\Omega; H)$.
In particular, their first and second moments coincide.
Since $\white_h^\cE$ and $\white_h^\Phi$
are Gaussian random variables,
their distributions are uniquely characterized
by their first two moments and we conclude that
\begin{align}\label{e:uQ-Ytilde}
    \widetilde{Y}(1)
    =
    Q_{h,k}^\beta (\proj g + \white_h^\cE )
    \overset{d}{=}
    Q_{h,k}^\beta (\proj g + \white_h^\Phi)
    =
    u_{h,k}^Q.
\end{align}

\subsection{The Kolmogorov backward equation and partition of the error}\label{subsec:kolmo}

With the aim of bounding the weak error in~\eqref{e:def:err-weak}
by means of It\^{o} calculus, we introduce
the following Kolmogorov backward equation associated with
the stochastic partial differential equation~\eqref{e:spde-Y} for $Y$
and the function $\varphi$ by
\begin{align}\label{e:def:kolmo}
    w_t(t,x) + \frac{1}{2} \tr\left( w_{xx}(t,x) L^{-2\beta} \right) = 0,
    \quad
    t\in[0,1], \
    x\in H,
    \qquad
    w(1,x) = \varphi(x).
\end{align}
Here, $w_x := D_x w$ and $w_{xx} := D^2_x w$ denote
the first and second order Fr\'{e}chet derivative of $w$
with respect to $x \in H$.
It is well-known \cite[Rem.~3.2.1, Thm.~3.2.3]{daprato2002} that
the solution $w \from [0,1] \times H \to \bbR$ to~\eqref{e:def:kolmo}
is given in terms of the stochastic process $Y$ in~\eqref{e:spde-Y}
by the following expectation
\begin{align}\label{e:w}
    w(t, x) = \bbE[ \varphi(x + Y(1) - Y(t)) ].
\end{align}
Since $\varphi\from H \to \bbR$ is twice continuously Fr\'{e}chet differentiable,
we can furthermore express the first two derivatives of $w$ with respect to $x$
in terms of $\varphi$ and $Y$ by
\begin{align}
    w_x(t,x) &= \bbE[ D\varphi(x + Y(1) - Y(t)) ], \label{e:wx} \\
    w_{xx}(t,x) &= \bbE[ D^2 \varphi(x + Y(1) - Y(t)) ]. \label{e:wxx}
\end{align}

Let $\widetilde{Y}$ be the solution to~\eqref{e:spde-Ytilde}.
The application of It\^{o}'s lemma \cite{brzezniak2003}
to the stochastic process
$(w(t,\widetilde{Y}(t)), \ t\in[0,1])$ yields
\begin{align}
\mathrm{d} w(t,\widetilde{Y}(t))
    &= \left( w_t( t, \widetilde{Y}(t) ) + \frac{1}{2} \tr \left( w_{xx}( t, \widetilde{Y}(t) ) Q_{h,k}^{\beta} \Pbeta L^{-2\beta} \bigl( Q_{h,k}^{\beta} \Pbeta\bigr)^* \right) \right) \rd t \notag \\
    &\qquad + w_x( t, \widetilde{Y}(t) ) Q_{h,k}^\beta \Pbeta \rd W^\beta(t),
     \qquad t\in[0,1], \label{e:Ito-lemma}
\end{align}
where, for $T\in\cL(H)$, the $H$-adjoint operator is denoted by $T^*$.
To simplify the second term in~\eqref{e:Ito-lemma},
we define the operator $\ptilde\from H \to V_h$ by
\begin{align}\label{e:def:ptilde}
    \ptilde g := \sum_{j=1}^{N_h} \scalar{g,e_j}{H} \, e_{j,h}.
\end{align}
Note that in contrast to the $H$-orthogonal projection $\proj$,
the operator $\ptilde$ is neither self-adjoint ($\ptilde^* \neq \ptilde$)
nor a projection ($\ptilde^2 \neq \ptilde$).
We then use the following relation between $\ptilde$ and
$\Pbeta$ from~\eqref{e:Pbeta},
\begin{align*}
\Pbeta L^{-\beta} g = \ptilde g
\qquad
\forall g\in H,
\end{align*}
and express~\eqref{e:Ito-lemma} as an integral
equation for $t=1$.
Taking the expectation on both sides of this equation
yields
\begin{align}
\begin{split}
\bbE[ w( 1, \widetilde{Y}(1) ) ]
    &= w( 0, Q_{h,k}^\beta \proj g ) \\
    &\quad + \frac{1}{2} \,
        \bbE \int_0^1 \tr \left( w_{xx}( t, \widetilde{Y}(t) ) \left(Q_{h,k}^{\beta} \ptilde {\ptilde}^*  Q_{h,k}^{\beta *} - L^{-2\beta} \right) \right) \rd t
        \label{e:w-Ytilde}
\end{split}
\end{align}
since
$\widetilde{Y}(0) = Q_{h,k}^\beta \proj g$ by \eqref{e:spde-Ytilde}
and
$w_t( t,\widetilde{Y}(t) ) = - \tfrac{1}{2}  \tr \bigl( w_{xx}( t, \widetilde{Y}(t) ) L^{-2\beta} \bigr)$
by~\eqref{e:def:kolmo}.

As a final step in this subsection,
we relate the quantity of interest $\bbE[\varphi(u)]$
with the expected value of $w(1,Y(1))$
and similarly for the approximation $\bbE[\varphi(u_{h,k}^Q)]$
and $w(1,\widetilde{Y}(1))$.
For this purpose, we extend the equalities 
in~\eqref{e:w}--\eqref{e:wxx}
to the case that $x=\xi$ is a an $H$-valued random variable
in the following lemma.

\begin{lemma}\label{l:w-phi}
Let Assumption~\ref{ass:all}\ref{ass:phi} be satisfied.
Then, for every $t\in[0,1]$ and any $\cF_t$-measurable random variable $\xi\in L_{p+2}(\Omega; H)$, it holds
\begin{align*}
    D_x^k w( t, \xi ) = \bbE[ D^k \varphi( \xi + Y(1) - Y(t) ) \, | \, \cF_t],
    \quad
    k\in\{0,1,2\}.
\end{align*}
\end{lemma}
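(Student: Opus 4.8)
The plan is to extend the deterministic identities \eqref{e:w}--\eqref{e:wxx}, which express $w(t,x)$ and its Fréchet derivatives as expectations of $\varphi$ and its derivatives evaluated at the point $x + Y(1) - Y(t)$, to the situation where the fixed argument $x\in H$ is replaced by a random variable $\xi$ that is $\cF_t$-measurable and lies in $L_{p+2}(\Omega;H)$. The essential structural fact I would exploit is that $Y(1)-Y(t) = W^\beta(1) - W^\beta(t)$ is, by the definition of $W^\beta$ in terms of the Brownian motions $\{B_j\}_{j\in\bbN}$, independent of $\cF_t$. This independence is what allows the conditional expectation to ``freeze'' $\xi$ and reduce the computation to the deterministic formulas.

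The key steps, in order, would be the following. First I would record the integrability needed to make every quantity well-defined: the increment $Y(1)-Y(t)$ is Gaussian with all moments finite (this is essentially Lemma~\ref{l:Ymoments}), $\xi\in L_{p+2}(\Omega;H)$ by hypothesis, and $D^2\varphi$ has polynomial growth of degree $p$ by Assumption~\ref{ass:all}\ref{ass:phi}; hence $D^2\varphi(\xi + Y(1)-Y(t))$ is integrable, and by the growth bounds following from $\varphi\in C^2(H;\bbR)$ one controls $\varphi$ and $D\varphi$ at polynomial degrees $p+2$ and $p+1$ respectively, so all three random variables $D^k\varphi(\xi+Y(1)-Y(t))$ are integrable for $k\in\{0,1,2\}$. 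Second, for the case $k=0$ I would verify the identity $w(t,\xi) = \bbE[\varphi(\xi + Y(1)-Y(t)) \mid \cF_t]$ by appealing to the standard ``freezing lemma'' (conditional expectation of a function of an $\cF_t$-measurable variable and an independent variable): since $\xi$ is $\cF_t$-measurable and $Y(1)-Y(t)$ is independent of $\cF_t$, one has $\bbE[\varphi(\xi + Y(1)-Y(t)) \mid \cF_t] = h(\xi)$ where $h(x) := \bbE[\varphi(x + Y(1)-Y(t))] = w(t,x)$ by \eqref{e:w}. Third, I would treat $k=1,2$ identically, replacing $\varphi$ by $D\varphi$ and $D^2\varphi$ and using \eqref{e:wx}--\eqref{e:wxx}; the only additional point is that differentiation in $x$ commutes with the (finite, by integrability) expectation, which is already granted by \eqref{e:wx}--\eqref{e:wxx}.

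A clean way to present the argument rigorously is to establish the freezing identity first for simple $\cF_t$-measurable random variables $\xi = \sum_i \mathbf{1}_{A_i} x_i$ with $A_i\in\cF_t$ and $x_i\in H$, where the conditional expectation decomposes over the sets $A_i$ and reduces on each to the deterministic formula, and then to pass to general $\xi\in L_{p+2}(\Omega;H)$ by approximation. The main obstacle I anticipate is precisely this approximation step: one must choose $\cF_t$-measurable simple functions $\xi_n \to \xi$ in $L_{p+2}(\Omega;H)$ and then pass to the limit inside $\bbE[D^k\varphi(\xi_n + Y(1)-Y(t)) \mid \cF_t]$, which requires a uniform-integrability or dominated-convergence argument using the polynomial growth of $D^2\varphi$ together with the $L_{p+2}$-convergence and the finiteness of all moments of the Gaussian increment $Y(1)-Y(t)$. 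Once that convergence is justified (for $k=2$ the degree-$p$ growth against $\xi\in L_{p+2}$ gives the needed margin, and for $k=0,1$ the margin is larger), the three identities follow simultaneously and the lemma is proved.
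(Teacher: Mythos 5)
Your proposal is correct and follows essentially the same route as the paper: the key ingredients --- independence of the increment $Y(1)-Y(t)$ from $\cF_t$, a ``freezing'' lemma for conditional expectations, and the polynomial growth of $D^2\varphi$ together with Lemma~\ref{l:Ymoments} for integrability --- are exactly those the paper uses, except that the paper simply cites \cite[Lem.~4.1]{kovacs2014} for the freezing step rather than re-deriving it via simple functions and approximation. The one detail you gloss over is $k=2$, where $D^2\varphi$ is $\cL(H)$-valued and the conditional expectation must be interpreted weakly; the paper handles this by testing against arbitrary $y,z\in H$, applying the scalar freezing lemma to $\varphi_{y}(x):=\scalar{D\varphi(x),y}{H}$ and $\varphi_{y,z}(x):=\scalar{D^2\varphi(x)z,y}{H}$, and then using separability of $H$.
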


\begin{proof}
For $k=0$, this identity follows from~\cite[Lem.~4.1]{kovacs2014}
with $N=p+2$, $\xi_1 = \xi$ and $\xi_2 = Y(1) - Y(t)$,
since $Y(t) \in L_{p+2}(\Omega; H)$
for all $t\in[0,1]$
by Lemma~\ref{l:Ymoments} and
$|\varphi(x)| \lesssim 1 + \norm{x}{H}^{p+2}$ as
a consequence of~\eqref{e:ass:phi-poly}.

Furthermore, for $y,\,z\in H$,
we define $\varphi_{y}, \varphi_{y,\,z} \from H \to \bbR$ by
\begin{align*}
    \varphi_{y}   (x) := \scalar{ D\varphi(x), y}{H}, \qquad
    \varphi_{y,\,z} (x) := \scalar{ D^2 \varphi(x) z, y}{H}.
\end{align*}
Since the inner product is bilinear and continuous with respect to both components,
we find with~\eqref{e:wx}--\eqref{e:wxx} that
\begin{align*}
    \scalar{ w_x(t,x), y}{H} &= \bbE[ \varphi_{y}( x + Y(1) - Y(t) ) ] , \\
    \scalar{ w_{xx}(t,x) z, y}{H} &= \bbE[ \varphi_{y,\,z}( x + Y(1) - Y(t) ) ].
\end{align*}
Thus, again applying~\cite[Lem.~4.1]{kovacs2014}
for $\xi_1 = \xi$ and $\xi_2 = Y(1) - Y(t)$
as well as $N=p+1$ and $N=p$, respectively,
yields
\begin{align*}
    \scalar{ w_x(t,\xi), y}{H} &= \bbE[ \varphi_{y}( \xi_1 + \xi_2) \, | \, \cF_t]
        = \scalar{ \bbE[ D\varphi( \xi + Y(1) - Y(t) ) \, | \, \cF_t], y}{H}, \\
    \scalar{ w_{xx}(t,\xi) z, y}{H} &= \bbE[ \varphi_{y,\,z}( \xi_1 + \xi_2 ) \, | \, \cF_t]
        = \scalar{ \bbE[ D^2\varphi( \xi + Y(1) - Y(t) ) \, | \, \cF_t] z, y}{H}
\end{align*}
by bilinearity and continuity of the inner product.
The separability of $H$
and the arbitrary choice of $y,\,z\in H$ complete
the proof of the assertion for $k\in\{1,2\}$.
\end{proof}

Owing to Lemma~\ref{l:w-phi} 
and the tower property for conditional expectations,
the stochastic process $(w(t,Y(t)), \ t\in [0,1])$
has no drift, i.e.,
\begin{align}\label{e:Ew-Y0}
    \bbE[ w( 1, Y(1) )]
    = \bbE[ \varphi(Y(1)) ]
    = \bbE[ w( 0, Y(0) )]
    = w( 0, L^{-\beta}g ).
\end{align}
Furthermore, it follows with~\eqref{e:u-Y} and~\eqref{e:uQ-Ytilde} that
\begin{align}\label{e:Ew-Y1}
    \bbE[ w( 1, Y(1) )]
        &= \bbE[ \varphi( Y(1) ) ]
         = \bbE[ \varphi(u) ], \\
    \bbE[ w( 1, \widetilde{Y}(1) ) ]
        &= \bbE[ \varphi( \widetilde{Y}(1) ) ]
         = \bbE[ \varphi( u_{h,k}^Q ) ]. \label{e:Ew-Ytilde}
\end{align}

Summing up the observations
in~\eqref{e:w-Ytilde}--\eqref{e:Ew-Ytilde},
we find that
the difference between
the quantity of interest $\bbE[\varphi(u)]$
and the expected value of the approximation~$\varphi(u_{h,k}^Q)$
can be expressed by
\begin{align*}
    \bbE[ \varphi(u) ] - \bbE[ \varphi( u_{h,k}^Q ) ]
        &= w( 0, L^{-\beta} g ) - w( 0, Q^\beta_{h,k} \proj g )  \\
        &\quad  - \frac{1}{2} \, \bbE \int_0^1 \tr \left( w_{xx}( t, \widetilde{Y}(t) ) \left( Q_{h,k}^{\beta} \ptilde \ptilde^*  Q_{h,k}^{\beta *} - L^{-2\beta} \right) \right) \rd t.
\end{align*}
This equality implies
that the weak error~\eqref{e:def:err-weak}
admits the following upper bound
\begin{align}
\begin{split}
\bigl| \bbE[ \varphi(u) ] &- \bbE[ \varphi( u_{h,k}^Q ) ] \bigr|
    \leq \bigl| w( 0, L^{-\beta} g ) - w( 0, L_h^{-\beta} \proj g ) \bigr| \\
        &\quad + \bigl| w( 0, L_h^{-\beta} \proj g) - w( 0, Q^\beta_{h,k} \proj g ) \bigr| \\
        &\quad + \frac{1}{2} \biggl| \bbE \int_0^1 \tr \left( w_{xx}( t, \widetilde{Y}(t) )
                                \left( \Qtilde \QtildeT - \Lhtilde \LhtildeT \right) \right) \rd t \biggr| \\
        &\quad + \frac{1}{2} \biggl| \bbE \int_0^1 \tr \left( w_{xx}( t, \widetilde{Y}(t) )
                                \left(\Lhtilde \LhtildeT - L^{-2\beta} \right) \right) \rd t \biggr| \\
        &=: \text{(I)} + \text{(II)} + \text{(III)} + \text{(IV)}, \label{e:err-partition}
\end{split}
\end{align}
where we set $\Qtilde :=  Q_{h,k}^{\beta} \ptilde$ and
$\Lhtilde := L_h^{-\beta} \ptilde$.

The following subsections are structured as follows:
In \S\ref{subsec:err-det} we bound the deterministic
error $\norm{(L^{-\beta} - L_h^{-\beta} \proj)g}{H}$
caused by the finite element discretization.
This result is essential for estimating
the first error term (I) in~\eqref{e:err-partition}.
Secondly, we investigate the terms (II) and (III)
stemming from applying the quadrature
operator $Q_{h,k}^\beta$ instead of the discrete
fractional inverse $L_h^{-\beta}$ in \S\ref{subsec:err-quad}.
Finally, in \S\ref{subsec:proof} we
estimate the trace in (IV) and combine
all our results to prove Theorem~\ref{thm:weak-conv}.

\subsection{The deterministic finite element error}\label{subsec:err-det}

In this subsection we focus on the deterministic error
$\norm{(L^{-\beta} - L_h^{-\beta} \proj)g}{H}$
caused by the inhomogeneity $g$.
More precisely,
we derive an explicit rate of convergence
depending on the $\Hdot{\theta}$-regularity of $g$
in Lemma~\ref{l:err-det} below.
Subsequently, in Lemma~\ref{l:err-I},
we apply this result
to bound the first term
of~\eqref{e:err-partition}.

\begin{lemma}\label{l:err-det}
Suppose Assumption~\ref{ass:all}\ref{ass:Sh} is satisfied.
Set $\theta_{*} := d(2\alpha\beta - 1) - 2\beta$ and let
$\theta > \min\{ \theta_{*}, s - 2\beta \}$ if $\theta_{*} \geq 0$,
and set $\theta = 0$ otherwise.
Then there exists a constant $C>0$, independent of $h$, such that
\begin{align}\label{e:err-det}
    \norm{(L^{-\beta} - L_h^{-\beta} \proj)g}{H} \leq C h^{\min\{d(2\alpha\beta-1), s\}} \norm{g}{\theta}
\end{align}
for all $g\in \Hdot{\theta}$ and sufficiently small $h\in(0,h_0)$.
\end{lemma}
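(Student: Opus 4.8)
The plan is to prove the deterministic error bound~\eqref{e:err-det} by using the integral representation of the fractional inverse $L^{-\beta}$ and its discrete counterpart $L_h^{-\beta}\proj$, and then exploiting the semigroup error estimate in Assumption~\ref{ass:all}\ref{ass:Sh}. The starting point is the Balakrishnan-type formula coming from the Dunford--Taylor calculus, which expresses the negative fractional power as an integral of the resolvent or, equivalently, as an integral of the semigroup: for $\beta\in(0,1)$ one has
\begin{align*}
    L^{-\beta} = \frac{\sin(\pi\beta)}{\pi} \int_0^\infty t^{-\beta} (t + L)^{-1} \rd t,
\end{align*}
and a completely analogous identity holds for $L_h^{-\beta}$ with $L$ replaced by $L_h$. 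The key observation is that the resolvent $(t+L)^{-1}$ can be written via the Laplace transform of the semigroup, $(t+L)^{-1} = \int_0^\infty e^{-ts} S(s) \rd s$, and likewise $(t+L_h)^{-1}\proj = \int_0^\infty e^{-ts} S_h(s)\proj \rd s$. Substituting these into the two fractional-power formulas and subtracting, I would obtain $(L^{-\beta} - L_h^{-\beta}\proj)g$ as a double integral whose integrand involves the semigroup difference $(S(s) - S_h(s)\proj)g$.

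First I would interchange the order of integration (justified by Fubini once absolute convergence is checked) to reduce the double integral over $(t,s)$ to a single integral in $s$ against a weight obtained from integrating the $t$-variable; concretely, $\int_0^\infty t^{-\beta} e^{-ts} \rd t = \Gamma(1-\beta)\, s^{\beta-1}$, so the problem collapses to estimating
\begin{align*}
    \norm{(L^{-\beta} - L_h^{-\beta}\proj)g}{H} \lesssim \int_0^\infty s^{\beta-1} \norm{(S(s) - S_h(s)\proj)g}{H} \rd s.
\end{align*}
At this stage I would apply the semigroup estimate from Assumption~\ref{ass:all}\ref{ass:Sh}, namely $\norm{(S(s)-S_h(s)\proj)g}{H} \leq C_3 h^\sigma s^{(\theta-\sigma)/2} \norm{g}{\theta}$, with the free parameters $\theta\le\sigma\le s$ (the regularity index and the convergence order) chosen to optimize the resulting $h$-power. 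Because the integral $\int_0^\infty s^{\beta-1+(\theta-\sigma)/2}\rd s$ does not converge on all of $(0,\infty)$ for a single choice of $\sigma$, the integral must be split at some reference point, say $s=1$ (or $s=h^2$), using one value of $\sigma$ on the small-$s$ part to control the singularity from $s^{\beta-1}$ and possibly a different value on the large-$s$ part where the exponential decay of the semigroup (analyticity of $-L$) provides convergence.

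The main obstacle I anticipate is twofold. First, the bookkeeping of exponents: one must track how $\sigma$ is chosen as a function of $\theta$, $\beta$, $d$, and $\alpha$ so that the small-$s$ integral converges (requiring $\beta + (\theta-\sigma)/2 > 0$) while producing the sharp $h$-rate $\min\{d(2\alpha\beta-1),s\}$; the appearance of the threshold $\theta_* = d(2\alpha\beta-1) - 2\beta$ and the two cases ($\theta_*\ge 0$ versus $\theta_*<0$) strongly suggest that the optimal $\sigma$ saturates at its maximal admissible value $s$ precisely when $d(2\alpha\beta-1)$ exceeds $s$, which is where the $\min$ in the rate switches. Second, the large-$s$ behavior: here I would need the trace-class/spectral information encoded in the eigenvalue growth~\eqref{e:ass:lambdaj} to ensure the integral converges and contributes at the correct order; the constraint $2\alpha\beta>1$ is what guarantees summability of $\sum_j \lambda_j^{-2\beta}$ and hence that the $g$-independent parts of the estimate are finite. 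Balancing these two regimes to land exactly on $h^{\min\{d(2\alpha\beta-1),s\}}$, rather than a suboptimal rate, is the delicate part, and it is precisely the lower bound on $\theta$ in the hypothesis that makes the small-$s$ integral converge at the claimed rate.
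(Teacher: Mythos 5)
Your plan follows essentially the same route as the paper: the identity
\begin{align*}
L^{-\beta} - L_h^{-\beta}\proj = \frac{1}{\Gamma(\beta)}\int_0^\infty t^{\beta-1}\bigl(S(t)-S_h(t)\proj\bigr)\rd t
\end{align*}
(which your Balakrishnan-plus-Laplace-transform derivation reproduces, since $\tfrac{\sin(\pi\beta)}{\pi}\Gamma(1-\beta)=\tfrac{1}{\Gamma(\beta)}$), followed by splitting the integral at $t=1$ and applying Assumption~\ref{ass:all}\ref{ass:Sh} with different exponent pairs on the two pieces. However, two points need correcting. First, the convergence of the tail $\int_1^\infty$ does not come from exponential decay of the semigroup, from analyticity of $-L$, or from the eigenvalue summability condition $2\alpha\beta>1$ (which plays no role in this deterministic lemma); it comes directly from Assumption~\ref{ass:all}\ref{ass:Sh} with $\sigma_2=s$ and $\theta_2=0$, which gives the integrand $t^{\beta-1-s/2}$, integrable at infinity precisely because the assumption stipulates $s>2\beta$. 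Second, and more substantively, your proposal defers the entire exponent bookkeeping, which is the actual content of the proof. The paper's choices are: if $\theta_*\geq 0$, write $\theta=\min\{\theta_*,s-2\beta\}+\epsilon$ and take $\sigma_1=\min\{d(2\alpha\beta-1),s\}$, $\theta_1=\min\{\theta,\sigma_1\}$, so that $\theta_1-\sigma_1=\min\{-2\beta+\epsilon,0\}$ and the exponent of $t$ near zero is $\beta-1+\tfrac{\theta_1-\sigma_1}{2}=\min\{\tfrac{\epsilon}{2},\beta\}-1>-1$; if $\theta_*<0$, take $\sigma_1=d(2\alpha\beta-1)$ and $\theta_1=0$, so the exponent is $-1-\tfrac{\theta_*}{2}>-1$. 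Without exhibiting these choices and verifying integrability near $t=0$, the claim that one lands exactly on $h^{\min\{d(2\alpha\beta-1),s\}}$ under the stated minimal regularity of $g$ remains unproven; your sketch correctly anticipates the structure (the threshold $\theta_*$ and the saturation of $\sigma$ at $s$) but does not establish it.
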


\begin{proof}
By applying \cite[Ch.~2, Eq.~(6.9)]{pazy1983} to the negative fractional powers of $L$ and~$L_h$, we find
\begin{align*}
    L^{-\beta} - L_h^{-\beta} \proj =
        \frac{1}{\Gamma(\beta)} \int_0^\infty t^{\beta-1} (S(t) - S_h(t) \proj) \rd t .
\end{align*}
Thus, Assumption~\ref{ass:all}\ref{ass:Sh} yields for
$0 \leq \theta_j \leq \sigma_j \leq s$ ($j=1,2$)
the estimate
\begin{align*}
    \norm{(L^{-\beta} - L_h^{-\beta} \proj)g}{H}
        &\lesssim h^{\sigma_1} \norm{g}{\theta_1} \int_0^1 t^{\beta-1+ \frac{\theta_1 - \sigma_1}{2}} \rd t
            + h^{\sigma_2} \norm{g}{\theta_2} \int_1^\infty t^{\beta-1+ \frac{\theta_2 - \sigma_2}{2}} \rd t .
\end{align*}

If $\theta_{*} \geq 0$, we let
$\epsilon>0$ be such that $\theta = \min\{\theta_{*}, s-2\beta\} + \epsilon$
and we choose
$\sigma_1 := \min\{d(2\alpha\beta-1),s\}$,
$\sigma_2 := s$,
$\theta_1 := \min\{ \theta, \sigma_1 \}$, and
$\theta_2 := 0$.
We then obtain
$\theta_1 - \sigma_1 = \min\{-2\beta + \epsilon, 0\}$
and
\begin{align*}
    \norm{(L^{-\beta} - L_h^{-\beta} \proj)g}{H}
        &\lesssim h^{\min\{d(2\alpha\beta-1),s\}}
            \Bigl( \tfrac{2}{\min\{\epsilon, 2\beta\}} \norm{g}{\theta_1} + \tfrac{2}{s-2\beta} \norm{g}{H} \Bigr).
\end{align*}

For $\theta_{*} < 0$, we instead set
$\sigma_1 := d(2\alpha\beta-1)$,
$\sigma_2 := s$,
$\theta_1 := 0$,
$\theta_2 := 0$,
and we conclude in a similar way that
\begin{align*}
    \norm{(L^{-\beta} - L_h^{-\beta} \proj)g}{H}
        &\lesssim h^{\min\{d(2\alpha\beta-1),s\}} \norm{g}{H} ( - 2 \theta_{*}^{-1} + 2 (s-2\beta)^{-1} ).
\end{align*}

Since in both cases
$\max\{ \norm{g}{\theta_1}, \norm{g}{\theta_2} \} \leq \norm{g}{\theta}$
with $\theta$ defined as in the statement of the lemma,
the bound~\eqref{e:err-det} follows.
\end{proof}

\begin{remark}
We note that by letting
$\sigma_1 = \sigma_2 := s$,
$\theta_1 := s-2\beta+\epsilon$, and
$\theta_2 := 0$ in the proof of Lemma~\ref{l:err-det}
the optimal convergence rate for the deterministic error,
\[
    \norm{(L^{-\beta} - L_h^{-\beta} \proj)g}{H} \leq C h^s \norm{g}{s-2\beta+\epsilon},
\]
can be derived.
The error estimate~\eqref{e:err-det}
is formulated in such a way that
the smoothness 
of $g\in\Hdot{\theta}$
is minimal for convergence with the rate
$\min\{d(2\alpha\beta-1), s\}$, which will
dominate the overall weak error,
stemming from the term (IV) in the partition~\eqref{e:err-partition},
see Lemma~\ref{l:err-IV}.

We furthermore remark that the convergence
result of Lemma~\ref{l:err-det}
is in accordance with the result of~\cite[Thm.~4.3]{bonito2015}.
There the self-adjoint positive definite operator $L$ is induced by
an $H_0^1(\cD)$-coercive, symmetric bilinear form $A$:
\begin{align*}
    \duality{L v, w}{} := A(v,w) = \int_\cD a(\xvec) \nabla v(\xvec) \cdot \nabla w(\xvec) \rd \xvec
    \quad
    \forall v,w \in \Hdot{1},
\end{align*}
where $0 < a_0 \leq a(\xvec) \leq a_1$,
$H := L_2(\cD)$, $\Hdot{1} := H^1_0(\cD)$ and
$\cD\subset\bbR^d$, $d\in\{1,2,3\}$, is a bounded polygonal domain
with Lipschitz boundary.
The discrete spaces $(V_h)_h$ considered in~\cite{bonito2015}
are the finite element spaces with continuous piecewise linear basis functions
defined with respect to a quasi-uniform family of triangulations.
The convergence rate for the error $\norm{(L^{-\beta} - L_h^{-\beta}\proj)g}{H}$
derived in~\cite[Thm.~4.3]{bonito2015} is
$2 \tau$, if $g\in \Hdot{\theta}$
for $\theta > 2 (\tau - \beta)$, if $\tau \geq \beta$,
and $\theta = 0$ otherwise.
Here, $\tau\in(0,1]$ is
such that the operators
\begin{align*}
    L^{-1} \from \widetilde{H}^{-1+\tau}(\cD) \to \widetilde{H}^{1+\tau}(\cD)
    \quad
    \text{and}
    \quad
    L \from \widetilde{H}^{1+\tau}(\cD) \to \widetilde{H}^{-1+\tau}(\cD)
\end{align*}
are bounded with respect to the intermediate Sobolev spaces
\begin{align*}
    \widetilde{H}^\varrho(\cD)
        &:= \begin{cases}
            H_0^1(\cD) \cap H^\varrho(\cD),         & \varrho \in [1, 2], \\
            [ L_2(\cD), H_0^1(\cD) ]_{\varrho,2},   & \varrho \in [0, 1], \\
            [ H^{-1}(\cD), L_2(\cD)]_{1+\varrho,2}, & \varrho \in [-1,0],
        \end{cases}
\end{align*}
where $H^{-1}(\cD) = \Hdot{-1}$ is the dual space
of $H^1_0(\cD) = \Hdot{1}$ and
$[\cdot, \cdot]_{\varrho,q}$ denotes
the real $K$-interpolation method.

According to this result of~\cite{bonito2015},
the convergence rate
$2\min\{d(\alpha\beta-1/2), 1\}$
can be achieved
if $g$ is $\Hdot{\theta}$-regular
for $\theta > \theta_*$ if
$\theta_* := 2 (\min\{d(\alpha\beta-1/2),1\} - \beta) \geq 0$
and $\theta = 0$ if $\theta_* < 0$.
A comparison with~\eqref{e:err-det} in Lemma~\ref{l:err-det}
shows that the error estimates and regularity assumptions coincide
for this particular case,
since $s=2$ for the choice of finite-dimensional subspaces $(V_h)_h$
in~\cite{bonito2015} specified above.
\end{remark}

Having bounded the error
between $L^{-\beta}g$ and $L_h^{-\beta}\proj g$,
an estimate of the first error term (I) in~\eqref{e:err-partition}
is an immediate consequence of the fundamental theorem
of calculus and the chain rule for Fr\'{e}chet derivatives.
This bound is formulated in the next lemma.

\begin{lemma}\label{l:err-I}
Let Assumptions~\ref{ass:all}\ref{ass:Sh}--\ref{ass:phi}
be satisfied and $2\alpha\beta>1$.
Define $\theta\geq 0$ as in Lemma~\ref{l:err-det}.
Then there exists a constant $C>0$, independent of $h$, such that
\begin{align*}
    \bigl| w( 0, L^{-\beta} g ) - w( 0, L_h^{-\beta} \proj g ) \bigr|
        \leq C h^{\min\{d(2\alpha\beta-1), s\}} \norm{g}{\theta} ( 1 + \norm{g}{H}^{p+1} )
\end{align*}
for all $g\in \Hdot{\theta}$ and sufficiently small $h\in(0,h_0)$.
\end{lemma}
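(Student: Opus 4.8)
The plan is to express the difference through the fundamental theorem of calculus along the line segment joining the two arguments, and then to control the resulting integrand by combining the deterministic estimate of Lemma~\ref{l:err-det} with the moment bound of Lemma~\ref{l:Ymoments}. Writing $x_0 := L_h^{-\beta}\proj g$ and $x_1 := L^{-\beta} g$ and setting $\gamma(\tau) := x_0 + \tau(x_1 - x_0)$ for $\tau\in[0,1]$, the chain rule for Fr\'echet derivatives (valid since $w(0,\cdot)$ is Fr\'echet differentiable with derivative $w_x(0,\cdot)$) gives
\[
    w(0,x_1) - w(0,x_0) = \int_0^1 \scalar{w_x(0,\gamma(\tau)), x_1 - x_0}{H} \rd \tau,
\]
so that by the Cauchy--Schwarz inequality
\[
    \bigl| w(0,x_1) - w(0,x_0) \bigr|
        \leq \Bigl( \sup_{\tau\in[0,1]} \norm{w_x(0,\gamma(\tau))}{H} \Bigr)
             \norm{L^{-\beta} g - L_h^{-\beta}\proj g}{H}.
\]
The second factor is at most $C h^{\min\{d(2\alpha\beta-1),s\}}\norm{g}{\theta}$ by Lemma~\ref{l:err-det}, so it remains to show that the supremum grows at most like $1 + \norm{g}{H}^{p+1}$.

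For the first factor, I would first record that $D\varphi$ inherits polynomial growth of degree $p+1$ from the assumption~\eqref{e:ass:phi-poly} on $D^2\varphi$: integrating $D^2\varphi$ along the segment from $0$ to $y$ yields $\norm{D\varphi(y)}{H} \leq \norm{D\varphi(0)}{H} + \int_0^1 \norm{D^2\varphi(\tau y)}{\cL(H)}\norm{y}{H}\rd\tau \lesssim 1 + \norm{y}{H}^{p+1}$ for all $y\in H$. Using the representation~\eqref{e:wx} of $w_x$ at $t=0$ together with Jensen's inequality and the elementary bound $\norm{a+b}{H}^{p+1}\lesssim \norm{a}{H}^{p+1}+\norm{b}{H}^{p+1}$ then gives
\[
    \norm{w_x(0,x)}{H}
        \leq \bbE\bigl[ \norm{D\varphi(x + Y(1) - Y(0))}{H} \bigr]
        \lesssim 1 + \norm{x}{H}^{p+1} + \bbE\bigl[ \norm{W^\beta(1)}{H}^{p+1} \bigr],
\]
where I have used $Y(1) - Y(0) = W^\beta(1)$ by~\eqref{e:u-Y}. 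The last expectation is finite and independent of $h$ and $g$: applying Lemma~\ref{l:Ymoments} with inhomogeneity $0$ (so that $Y\equiv W^\beta$) and exponent $p+1\geq 3$ bounds it by a power of $\tr(L^{-2\beta})$, which is finite since $2\alpha\beta>1$.

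It then remains to control $\norm{\gamma(\tau)}{H}$ uniformly in $\tau$. Since $\norm{\gamma(\tau)}{H}\leq \max\{\norm{x_0}{H},\norm{x_1}{H}\}$ and both $L^{-\beta}$ and $L_h^{-\beta}\proj$ are bounded on $H$ uniformly in $h$ --- indeed $\norm{L^{-\beta} g}{H}\leq \lambda_1^{-\beta}\norm{g}{H}$, and, using $\lambda_{1,h}\geq\lambda_1$ from Assumption~\ref{ass:all}\ref{ass:Vh-2} together with $\norm{\proj}{\cL(H)}=1$, also $\norm{L_h^{-\beta}\proj g}{H}\leq \lambda_1^{-\beta}\norm{g}{H}$ --- one obtains $\sup_{\tau\in[0,1]}\norm{\gamma(\tau)}{H}^{p+1}\lesssim \norm{g}{H}^{p+1}$. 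Combining the three displays yields the claimed estimate. The only genuinely delicate point is the passage from the growth of $D^2\varphi$ to an $h$- and $g$-uniform bound on $w_x$; once the moment estimate of Lemma~\ref{l:Ymoments} is invoked, the remaining steps are routine.
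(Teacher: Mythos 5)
Your proposal is correct and follows essentially the same route as the paper's proof: the fundamental theorem of calculus plus Cauchy--Schwarz, Lemma~\ref{l:err-det} for the deterministic factor, the representation~\eqref{e:wx} of $w_x$, and the polynomial growth of $D\varphi$ inherited from~\eqref{e:ass:phi-poly} combined with the moment bound of Lemma~\ref{l:Ymoments}. The only (immaterial) difference is that you bound $\bbE[\norm{W^\beta(1)}{H}^{p+1}]$ directly and control $\gamma(\tau)$ via the uniform bounds $\norm{L^{-\beta}}{\cL(H)},\norm{L_h^{-\beta}\proj}{\cL(H)}\leq\lambda_1^{-\beta}$, whereas the paper absorbs $Y(0)=L^{-\beta}g$ into $Y(1)$ and invokes $\bbE[\norm{Y(1)}{H}^{p+1}]$.
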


\begin{proof}
Since the mapping $x \mapsto w(0,x)$ is Fr\'{e}chet differentiable,
we obtain by the fundamental theorem of calculus and the Cauchy--Schwarz inequality
\begin{align*}
    \bigl| w( 0, &\, L_h^{-\beta} \proj g ) - w( 0, L^{-\beta} g ) \bigr| \\
        &= \Bigl| \int_0^1 \scalar{ w_x(0, L^{-\beta} g + t ( L_h^{-\beta} \proj - L^{-\beta}) g ), (L_h^{-\beta} \proj - L^{-\beta} )g }{H} \rd t \Bigr| \\
        &\leq \norm{(L_h^{-\beta} \proj - L^{-\beta} )g }{H} \sup_{t\in[0,1]} \norm{ w_x(0, L^{-\beta} g + t ( L_h^{-\beta} \proj - L^{-\beta}) g ) }{H}.
\end{align*}
A bound for the first term is given by~\eqref{e:err-det} in Lemma~\ref{l:err-det}.
For the second term, we use~\eqref{e:wx}, $Y(0) = L^{-\beta}g$,
and the polynomial growth~\eqref{e:ass:phi-poly}
of $D^2\varphi$ to estimate
\begin{align*}
\norm{ w_x(0, L^{-\beta} g + t ( L_h^{-\beta} \proj - L^{-\beta}) g ) }{H}
    &\leq \bbE[ \norm{ D\varphi( Y(1) + t ( L_h^{-\beta} \proj - L^{-\beta}) g )}{H} ] \\
    &\lesssim ( 1 + \bbE[ \norm{ Y(1)}{H}^{p+1} ] + \norm{g}{H}^{p+1} )
\end{align*}
for all $t\in[0,1]$.
The boundedness~\eqref{e:Lp-Y} of the $(p+1)$-th moment of $Y(1)$ completes the proof,
since the trace of $L^{-2\beta}$ is finite if $2\alpha\beta > 1$.
\end{proof}

\subsection{The quadrature approximation}\label{subsec:err-quad}

In this subsection we address the error terms (II) and (III)
in~\eqref{e:err-partition}, which are induced by the quadrature
approximation $Q_{h,k}^\beta$ of $L_h^{-\beta}$.
To this end, we start by stating the following result
of~\cite[Lem.~3.4, Thm.~3.5]{bonito2015} that bounds the error
between the two operators on $V_h$.

\begin{lemma}\label{l:Qhk}
The approximation $Q_{h,k}^\beta \from V_h \to V_h$
of $L_h^{-\beta}$ in~\eqref{e:def:Qhk} admits the bound
\begin{align*}
    \norm{( Q_{h,k}^\beta - L_h^{-\beta} ) \phi_h}{H}
        &\leq C e^{-\frac{\pi^2}{2k}} \norm{\phi_h}{H}
        \quad
        \forall \phi_h \in V_h,
\end{align*}
and it is bounded,
$\norm{Q_{h,k}}{\cL(V_h)} \leq C'$,
for sufficiently small $h\in(0,h_0)$, $k\in(0,k_0)$,
where the constants $C,C'>0$ depend only on
$\beta$ and the smallest eigenvalue 
of $L$.
\end{lemma}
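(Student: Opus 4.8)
The plan is to prove this by recalling the explicit quadrature result from Bonito and Pasciak \cite{bonito2015}, which established essentially this statement for the quadrature scheme defined in~\eqref{e:def:Qhk}. The two assertions to establish are the exponential-in-$k$ decay of the quadrature error $\norm{(Q_{h,k}^\beta - L_h^{-\beta})\phi_h}{H}$ and the uniform boundedness $\norm{Q_{h,k}^\beta}{\cL(V_h)} \leq C'$. Since both $Q_{h,k}^\beta$ and $L_h^{-\beta}$ are functions of the single self-adjoint operator $L_h$ on $V_h$, the cleanest route is to diagonalize in the eigenbasis $\cE = \{e_{j,h}\}_{j=1}^{N_h}$ and reduce everything to scalar estimates on the eigenvalues $\{\lambda_{j,h}\}_{j=1}^{N_h}$, all of which satisfy $\lambda_{j,h} \geq \lambda_{1,h} \geq \lambda_1 > 0$ by Assumption~\ref{ass:all}\ref{ass:Vh-2}.

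First I would record the integral representation behind the quadrature. The Dunford--Taylor (or Balakrishnan) formula gives, for $\mu > 0$,
\begin{align*}
    \mu^{-\beta} = \frac{2\sin(\pi\beta)}{\pi} \int_{-\infty}^{\infty} e^{2\beta y} \left(1 + e^{2y}\mu\right)^{-1} \rd y,
\end{align*}
so that applying the sinc quadrature with nodes $y_\ell = \ell k$ and step $k$ to the integrand and spectrally lifting to $L_h$ yields exactly $Q_{h,k}^\beta$ in~\eqref{e:def:Qhk}. The quadrature error then splits into a truncation error from replacing the infinite sum by $\sum_{\ell=-K^-}^{K^+}$ and an interior discretization error from the sinc rule itself. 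On each eigenvalue $\mu = \lambda_{j,h}$ the integrand $F_\mu(y) := e^{2\beta y}(1 + e^{2y}\mu)^{-1}$ decays like $e^{2\beta y}$ as $y\to-\infty$ and like $e^{-2(1-\beta)y}$ as $y\to+\infty$; the specific choices $K^- = \lceil \pi^2/(4\beta k^2)\rceil$ and $K^+ = \lceil \pi^2/(4(1-\beta)k^2)\rceil$ are calibrated precisely so that both tail contributions are controlled by $e^{-\pi^2/(2k)}$. For the discretization error one uses that $F_\mu$ extends analytically to a horizontal strip and applies the standard sinc-quadrature bound, which produces the $e^{-\pi^2/(2k)}$ factor as well; the key point is that these bounds are \emph{uniform} over $\mu \geq \lambda_1 > 0$, so passing back through the spectral calculus gives the stated operator norm estimate with $C$ depending only on $\beta$ and $\lambda_1$.

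For the uniform boundedness claim, I would note that $\norm{Q_{h,k}^\beta}{\cL(V_h)} \leq \norm{L_h^{-\beta}}{\cL(V_h)} + \norm{(Q_{h,k}^\beta - L_h^{-\beta})}{\cL(V_h)}$, where the first term equals $\lambda_{1,h}^{-\beta} \leq \lambda_1^{-\beta}$ by the spectral mapping and Assumption~\ref{ass:all}\ref{ass:Vh-2}, and the second is bounded by $Ce^{-\pi^2/(2k)} \leq C$ for $k\in(0,k_0)$ by the first assertion. This yields $C' > 0$ depending only on $\beta$ and $\lambda_1$.

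I expect the main technical obstacle to be the sinc-quadrature analyticity estimate: verifying that $F_\mu$ is analytic in a strip $|\Im y| < \delta$ uniformly in $\mu \geq \lambda_1$ (the poles of $(1+e^{2y}\mu)^{-1}$ lie at $e^{2y}\mu = -1$, i.e.\ $\Im y = \pm\pi/2$, so $\delta$ can be taken close to $\pi/2$ independently of $\mu$), and controlling the $L_1$-norm of $F_\mu$ along the strip boundaries so that the trapezoidal/sinc error constant does not degenerate as $\mu$ varies. Since this delicate estimate is exactly what is carried out in \cite[Lem.~3.4, Thm.~3.5]{bonito2015}, the cleanest presentation simply invokes that reference after matching the node and step-size conventions, which is the route I would take.
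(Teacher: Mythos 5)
Your proposal is correct and matches the paper's treatment: the paper gives no proof of its own for this lemma but simply quotes it as a result of \cite[Lem.~3.4, Thm.~3.5]{bonito2015}, which is exactly the route you ultimately take. Your accompanying sketch of the Balakrishnan representation, the tail/discretization split with the calibrated truncation indices $K^{\pm}$, the uniform-in-$\mu\geq\lambda_1$ strip analyticity, and the triangle-inequality argument for the uniform bound on $\norm{Q_{h,k}^\beta}{\cL(V_h)}$ is an accurate account of what that reference establishes.
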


In the following, we use this error estimate of the quadrature
approximation $Q_{h,k}^\beta$ for bounding
the second term of~\eqref{e:err-partition} in Lemma~\ref{l:err-II}
as well as the trace occurring
in the third term of~\eqref{e:err-partition} in Lemma~\ref{l:err-III}.

\begin{lemma}\label{l:err-II}
Suppose that Assumption~\ref{ass:all}\ref{ass:phi} is satisfied and that $2\alpha\beta > 1$.
Then there exists a constant $C>0$, independent of $h$ and $k$, such that
\begin{align*}
    \bigl| w( 0, L_h^{-\beta} \proj g ) - w( 0, Q_{h,k}^\beta \proj g ) \bigr|
        \leq C e^{-\frac{\pi^2}{2k}} \norm{g}{H} ( 1 + \norm{g}{H}^{p+1} )
\end{align*}
for all $g\in H$ and sufficiently small $h\in(0,h_0)$ and $k\in(0,k_0)$.
\end{lemma}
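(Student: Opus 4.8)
The plan is to mimic the structure of the proof of Lemma~\ref{l:err-I}, replacing the deterministic finite element error with the quadrature error from Lemma~\ref{l:Qhk}. Since $x\mapsto w(0,x)$ is Fr\'echet differentiable, the fundamental theorem of calculus together with the Cauchy--Schwarz inequality yields
\begin{align*}
    \bigl| w( 0, L_h^{-\beta} \proj g ) - w( 0, Q_{h,k}^\beta \proj g ) \bigr|
        \leq \norm{( L_h^{-\beta} - Q_{h,k}^\beta ) \proj g }{H}
             \sup_{t\in[0,1]} \norm{ w_x(0, \zeta_t ) }{H},
\end{align*}
where $\zeta_t := Q_{h,k}^\beta \proj g + t ( L_h^{-\beta} - Q_{h,k}^\beta ) \proj g$ interpolates between the two arguments.

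For the first factor I would invoke Lemma~\ref{l:Qhk}, which gives $\norm{( Q_{h,k}^\beta - L_h^{-\beta} ) \proj g }{H} \leq C e^{-\pi^2/(2k)} \norm{\proj g}{H} \leq C e^{-\pi^2/(2k)} \norm{g}{H}$, using that the $H$-orthogonal projection $\proj$ is a contraction on $H$. This produces the desired $e^{-\pi^2/(2k)} \norm{g}{H}$ prefactor.

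For the second factor I would use the representation~\eqref{e:wx}, namely $w_x(0,x) = \bbE[ D\varphi(x + Y(1) - Y(0)) ]$ with $Y(0) = L^{-\beta} g$, so that $x + Y(1) - Y(0)$ becomes $\zeta_t + W^\beta(1)$ after the telescoping. Since the polynomial growth~\eqref{e:ass:phi-poly} of $D^2\varphi$ of degree $p$ implies $\norm{D\varphi(x)}{H} \lesssim 1 + \norm{x}{H}^{p+1}$, I would bound $\norm{ w_x(0,\zeta_t ) }{H} \lesssim 1 + \bbE[\norm{W^\beta(1)}{H}^{p+1}] + \sup_{t\in[0,1]}\norm{\zeta_t}{H}^{p+1}$. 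The moment $\bbE[\norm{W^\beta(1)}{H}^{p+1}]$ is finite because $\tr(L^{-2\beta}) < \infty$ when $2\alpha\beta > 1$, by Lemma~\ref{l:Ymoments}. The terms $\norm{\zeta_t}{H}$ are controlled by $\norm{Q_{h,k}^\beta}{\cL(V_h)} \leq C'$ and $\norm{L_h^{-\beta}}{\cL(V_h)} \leq \lambda_{1,h}^{-\beta} \leq \lambda_1^{-\beta}$ (both uniform in $h,k$ for small $h,k$ by Lemma~\ref{l:Qhk} and Assumption~\ref{ass:all}\ref{ass:Vh-2}), giving $\sup_{t}\norm{\zeta_t}{H} \lesssim \norm{g}{H}$ and hence a contribution of order $\norm{g}{H}^{p+1}$.

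The only mild subtlety is ensuring all the operator-norm bounds appearing in $\sup_t \norm{\zeta_t}{H}$ are uniform in $h$ and $k$; this is exactly what the boundedness statement $\norm{Q_{h,k}^\beta}{\cL(V_h)} \leq C'$ in Lemma~\ref{l:Qhk} (for sufficiently small $h,k$) provides, together with the uniform lower bound on $\lambda_{1,h}$. Collecting the two factors yields a bound of the form $C e^{-\pi^2/(2k)} \norm{g}{H} ( 1 + \norm{g}{H}^{p+1})$, as claimed. This argument is essentially routine once Lemma~\ref{l:Qhk} is in hand, so I do not anticipate a genuine obstacle; the whole point is that the quadrature error supplies the exponential factor while the moment bound and uniform operator bounds absorb everything else into the constant and the polynomial-in-$\norm{g}{H}$ term.
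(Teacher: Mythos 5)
Your proposal is correct and follows essentially the same route as the paper: fundamental theorem of calculus plus Cauchy--Schwarz for the difference of $w(0,\cdot)$ at the two arguments, Lemma~\ref{l:Qhk} for the $e^{-\pi^2/(2k)}\norm{g}{H}$ factor, and the representation~\eqref{e:wx} together with the polynomial growth of $D^2\varphi$ and the moment bound of Lemma~\ref{l:Ymoments} for the remaining factor. The only cosmetic difference is that you phrase the second factor in terms of $W^\beta(1)$ and $\sup_t\norm{\zeta_t}{H}$ rather than $Y(1)$ and $\norm{g}{H}$ directly, and you make explicit the uniform-in-$h,k$ operator bounds that the paper uses implicitly.
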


\begin{proof}
As in the proof of Lemma~\ref{l:err-I},
we apply the fundamental theorem of calculus and the chain rule for Fr\'echet derivatives.
By~\eqref{e:wx} and Lemma~\ref{l:Qhk} we then find
\begin{align*}
    \bigl| w( 0, Q_{h,k}^\beta \proj g &) - w( 0, L_h^{-\beta} \proj g ) \bigr|
         \leq \norm{( Q_{h,k}^\beta - L_h^{-\beta} ) \proj g}{H} \\
        &\quad \times \sup_{t\in[0,1]} \bbE[ \norm{ D\varphi( L_h^{-\beta}\proj g + t ( Q_{h,k}^\beta - L_h^{-\beta} ) \proj g + Y(1) - L^{-\beta}g ) }{H} ] \\
        &\lesssim e^{-\frac{\pi^2}{2k}} \norm{g}{H} ( 1 + \bbE[ \norm{Y(1)}{H}^{p+1} ] + \norm{g}{H}^{p+1} ).
\end{align*}
Again, the proof is completed by~\eqref{e:Lp-Y} and the fact that $\tr(L^{-2\beta}) < \infty$.
\end{proof}

\begin{lemma}\label{l:err-III}
Let Assumptions~\ref{ass:all}\ref{ass:Vh-1}--\ref{ass:L} be satisfied.
Then there exists a constant $C>0$, independent of $h$ and $k$, such that
\begin{align*}
    \bigl| \tr( T ( \Qtilde \QtildeT - \Lhtilde \LhtildeT ) ) \bigr|
         \leq C \left( e^{-\frac{\pi^2}{k}} h^{-d} + e^{-\frac{\pi^2}{2k}} + e^{-\frac{\pi^2}{2k}} f_{\alpha,\beta}(h) \right) \norm{T}{\cL(H)}
\end{align*}
for every self-adjoint $T \in \cL(H)$ and sufficiently small $h\in(0,h_0)$ and $k\in(0,k_0)$.
Here, the function $f_{\alpha,\beta}$ is defined as in Theorem~\ref{thm:weak-conv}.
\end{lemma}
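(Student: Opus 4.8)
The plan is to estimate the trace of $T$ against the operator difference $\Qtilde\QtildeT - \Lhtilde\LhtildeT$ by first exploiting self-adjointness of $T$ and the general trace bound $|\tr(TS)| \leq \norm{T}{\cL(H)} \tr|S|$ (or equivalently $\norm{T}{\cL(H)}$ times the trace norm of $S$), so that the problem reduces to estimating the trace norm of $\Qtilde\QtildeT - \Lhtilde\LhtildeT$. I would open with the algebraic identity
\begin{align*}
    \Qtilde\QtildeT - \Lhtilde\LhtildeT
    = (\Qtilde - \Lhtilde)\QtildeT + \Lhtilde(\QtildeT - \LhtildeT),
\end{align*}
which splits the difference into two pieces, each containing one factor $\Qtilde - \Lhtilde = (Q_{h,k}^\beta - L_h^{-\beta})\ptilde$ that is controlled by Lemma~\ref{l:Qhk}.

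Next I would estimate each piece in trace norm using the submultiplicativity $\norm{AB}{\mathcal{S}_1} \leq \norm{A}{\cL(H)} \norm{B}{\mathcal{S}_1}$ (and its variant with the roles reversed). For the quadrature factor $Q_{h,k}^\beta - L_h^{-\beta}$, Lemma~\ref{l:Qhk} supplies the operator-norm bound $C e^{-\pi^2/(2k)}$ on $V_h$ as well as uniform boundedness of $Q_{h,k}^\beta$; the remaining factors $\Lhtilde$ and $\ptilde$ then need trace-norm or Hilbert--Schmidt bounds. The key structural point is that $\ptilde$ maps $e_j \mapsto e_{j,h}$, so $\Lhtilde = L_h^{-\beta}\ptilde$ acts on the eigenbasis essentially as $\lambda_{j,h}^{-\beta}\,\ptilde$, and its Hilbert--Schmidt norm is governed by $\sum_{j=1}^{N_h}\lambda_{j,h}^{-2\beta}$, which by Assumption~\ref{ass:all}\ref{ass:Vh-2}--\ref{ass:L} behaves like $\sum_{j=1}^{N_h} j^{-2\alpha\beta}$. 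This sum is finite uniformly in $h$ when $2\alpha\beta > 1$; when $2\alpha\beta$ is near $1$ or below one needs the $f_{\alpha,\beta}(h)$ correction, and when the exponent is such that the sum diverges one picks up an $h^{-d}$-type factor from the $N_h \propto h^{-d}$ terms. I expect this accounting of the eigenvalue sums, distinguishing the regimes $\alpha\beta < 1$, $\alpha\beta = 1$, $\alpha\beta > 1$, to reproduce exactly the three terms $e^{-\pi^2/k}h^{-d}$, $e^{-\pi^2/(2k)}$, and $e^{-\pi^2/(2k)}f_{\alpha,\beta}(h)$.

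More precisely, I would write $\tr(T(\Qtilde - \Lhtilde)\QtildeT)$ and bound it by $\norm{T}{\cL(H)}$ times the trace norm of $(\Qtilde - \Lhtilde)\QtildeT$; estimating the latter as $\norm{Q_{h,k}^\beta - L_h^{-\beta}}{\cL(V_h)}$ times $\norm{\ptilde}{\mathcal{S}_2}\,\norm{\QtildeT}{\mathcal{S}_2}$ (splitting one trace-class factor into two Hilbert--Schmidt factors) yields the quadratic-in-$e^{-\pi^2/(2k)}$ term multiplied by $\norm{\ptilde}{\mathcal{S}_2}^2 \propto N_h \propto h^{-d}$, giving the $e^{-\pi^2/k}h^{-d}$ contribution. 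The mixed term $\Lhtilde(\QtildeT - \LhtildeT)$ carries only one factor of $e^{-\pi^2/(2k)}$ and pairs it against $\norm{\Lhtilde}{\mathcal{S}_1}$, whose size is exactly where $f_{\alpha,\beta}(h)$ enters: the trace $\tr(\Lhtilde\LhtildeT) = \sum_j \lambda_{j,h}^{-2\beta}\norm{e_{j,h}}{H}^2$ diverges logarithmically or polynomially in $N_h$ according to whether $\alpha\beta = 1$ or $\alpha\beta < 1$. The main obstacle I anticipate is the bookkeeping of these eigenvalue sums uniformly in $h$, in particular controlling $\sum_{j=1}^{N_h}\lambda_{j,h}^{-2\beta}$ via the comparison $\lambda_{j,h} \geq \lambda_j \gtrsim j^\alpha$ from Assumption~\ref{ass:all}\ref{ass:Vh-2}--\ref{ass:L} and then evaluating $\sum_{j=1}^{N_h} j^{-2\alpha\beta}$ by comparison with an integral to extract precisely the $\max\{1, h^{d(1-\alpha\beta)}, |\ln h|\}$ behavior that the statement encodes through $f_{\alpha,\beta}$ and the separate $e^{-\pi^2/(2k)}$ term.
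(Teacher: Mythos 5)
Your overall strategy---peeling off the quadrature error via Lemma~\ref{l:Qhk}, reducing to eigenvalue sums, and distinguishing the regimes $\alpha\beta<1$, $\alpha\beta=1$, $\alpha\beta>1$---is the right one, and your treatment of the mixed term $\Lhtilde(\QtildeT-\LhtildeT)$ matches the paper's cross term, modulo one slip: the quantity that produces $f_{\alpha,\beta}(h)$ is the trace norm $\norm{\Lhtilde}{\mathcal{S}_1}=\sum_{j=1}^{N_h}\lambda_{j,h}^{-\beta}\lesssim\sum_j j^{-\alpha\beta}$ (sum of the \emph{singular values}), not $\tr(\Lhtilde\LhtildeT)=\sum_{j=1}^{N_h}\lambda_{j,h}^{-2\beta}$, which stays bounded uniformly in $h$ because $2\alpha\beta>1$. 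The genuine gap is in the other piece. The first-order splitting $\Qtilde\QtildeT-\Lhtilde\LhtildeT=(\Qtilde-\Lhtilde)\QtildeT+\Lhtilde(\QtildeT-\LhtildeT)$ contains the difference only to the first power in each summand, so $(\Qtilde-\Lhtilde)\QtildeT$ cannot produce the quadratic factor $e^{-\pi^2/k}$: your proposed bound $\norm{Q_{h,k}^\beta-L_h^{-\beta}}{\cL(V_h)}\,\norm{\ptilde}{\mathcal{S}_2}\,\norm{\QtildeT}{\mathcal{S}_2}$ is of order $e^{-\pi^2/(2k)}\cdot N_h^{1/2}\cdot N_h^{1/2}=e^{-\pi^2/(2k)}h^{-d}$, since $\norm{\QtildeT}{\mathcal{S}_2}\leq\norm{Q_{h,k}^\beta}{\cL(V_h)}\norm{\ptilde}{\mathcal{S}_2}$ carries no smallness at all ($Q_{h,k}^\beta$ approximates the $O(1)$ operator $L_h^{-\beta}$). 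The quantity $e^{-\pi^2/(2k)}h^{-d}$ dominates every term on the right-hand side of the lemma (for $\tfrac12<\alpha\beta<1$ it exceeds $e^{-\pi^2/(2k)}f_{\alpha,\beta}(h)=e^{-\pi^2/(2k)}h^{-d(1-\alpha\beta)}$), and under the calibration of Remark~\ref{remark:calibration} it would destroy the rate doubling that is the point of the result; even the sharper pairing $\norm{\Qtilde-\Lhtilde}{\mathcal{S}_2}\norm{\QtildeT}{\mathcal{S}_2}\lesssim e^{-\pi^2/(2k)}h^{-d/2}$ only recovers the strong rate.

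The repair is to complete the square: write $\Qtilde\QtildeT-\Lhtilde\LhtildeT=(\Qtilde-\Lhtilde)(\QtildeT-\LhtildeT)+(\Qtilde-\Lhtilde)\LhtildeT+\Lhtilde(\QtildeT-\LhtildeT)$. The first summand is genuinely quadratic in the quadrature error and yields $e^{-\pi^2/k}N_h\propto e^{-\pi^2/k}h^{-d}$, while in the two cross terms the single factor $e^{-\pi^2/(2k)}$ (using $\norm{\ptilde}{\cL(H)}\leq 1$) is paired with $\norm{\Lhtilde}{\mathcal{S}_1}=\sum_{j=1}^{N_h}\lambda_{j,h}^{-\beta}\lesssim 1+f_{\alpha,\beta}(h)$ rather than with $\QtildeT$. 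This is exactly the paper's proof in operator form: there the trace is expanded over the discrete eigenbasis $\{e_{j,h}\}_{j=1}^{N_h}$ (using $\ptilde e_j=e_{j,h}$) and the identity $\scalar{TQx,Qx}{H}-\scalar{TLx,Lx}{H}=\scalar{T(Q-L)x,(Q-L)x}{H}+2\scalar{T(Q-L)x,Lx}{H}$, valid by self-adjointness of $T$, produces the same three contributions, with the diagonal sum bounded by $e^{-\pi^2/k}N_h$ and the cross sum by $e^{-\pi^2/(2k)}\sum_{j=1}^{N_h}\lambda_{j,h}^{-\beta}$.
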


\begin{proof}
By the definition of $\ptilde$ in~\eqref{e:def:ptilde}
we have
\begin{align}\label{e:ptilde-e}
    \ptilde e_j = e_{j, h},
    \quad
    j\in\{1,\ldots,N_h\},
    \qquad
    \ptilde e_j = 0,
    \quad
    j > N_h.
\end{align}
Therefore, the trace of interest simplifies to a finite sum,
\begin{align}
    \tr( T ( \Qtilde \QtildeT - \Lhtilde \LhtildeT ) )
        &= \sum_{j=1}^{N_h} \bigl[ \scalar{T Q_{h,k}^\beta e_{j,h}, Q_{h,k}^\beta e_{j,h} }{H}
                -  \scalar{T L_h^{-\beta} e_{j,h}, L_h^{-\beta} e_{j,h} }{H} \bigr] \notag \\
        &= \sum_{j=1}^{N_h} \scalar{T (Q_{h,k}^\beta - L_h^{-\beta}) e_{j,h}, (Q_{h,k}^\beta - L_h^{-\beta}) e_{j,h} }{H} \notag \\
        &\qquad + 2 \sum_{j=1}^{N_h} \scalar{T (Q_{h,k}^\beta - L_h^{-\beta}) e_{j,h}, L_h^{-\beta} e_{j,h} }{H} \notag \\
        &=: S_1 + 2 S_2,
        \label{e:trTQ-Lhtilde}
\end{align}
where the second equality follows from the self-adjointness of $T\in\cL(H)$.

The application of the Cauchy--Schwarz inequality and
of Lemma~\ref{l:Qhk} to the first sum yield the following upper bound
\begin{align*}
    | S_1 |
        &\leq \norm{T}{\cL(H)} \sum_{j=1}^{N_h} \norm{(Q_{h,k}^\beta - L_h^{-\beta})e_{j,h}}{H}^2
         \leq C e^{-\frac{\pi^2}{k}} N_h \norm{T}{\cL(H)}.
\end{align*}
By Assumption~\ref{ass:all}\ref{ass:Vh-1} we thus have
$| S_1 |\lesssim e^{-\frac{\pi^2}{k}} h^{-d} \norm{T}{\cL(H)}$.

The second sum can be bounded by
\begin{align*}
    | S_2 |
        &\leq \norm{T}{\cL(H)} \max_{1\leq j \leq N_h}  \norm{(Q_{h,k}^\beta - L_h^{-\beta})e_{j,h}}{H}
            \sum_{j=1}^{N_h} \lambda_{j,h}^{-\beta}.
\end{align*}
Finally, due to the approximation property
of the discrete eigenvalues $\lambda_{j,h}$
in Assumption~\ref{ass:all}\ref{ass:Vh-2}
as well as the growth~\eqref{e:ass:lambdaj} of the
exact eigenvalues $\lambda_j$ we obtain
$\lambda_{j,h}^{-\beta} \leq \lambda_{j}^{-\beta} \leq c_{\lambda}^{-\beta} j^{-\alpha\beta}$
and, for $\alpha\beta\neq 1$, we find
\begin{align*}
    | S_2 |
        &\lesssim e^{-\frac{\pi^2}{2k}} (1 + N_h^{1-\alpha\beta} ) \norm{T}{\cL(H)}
        \lesssim e^{-\frac{\pi^2}{2k}} (1 + h^{d(\alpha\beta - 1)} ) \norm{T}{\cL(H)},
\end{align*}
where we have used
Lemma~\ref{l:Qhk} and Assumption~\ref{ass:all}\ref{ass:Vh-1}.
If $\alpha\beta=1$, we instead estimate
$| S_2 | \lesssim e^{- \pi^2 / (2k)} (1 + |\ln(h)| ) \, \norm{T}{\cL(H)}$.
This completes the proof.
\end{proof}

\subsection{Proof of Theorem~\ref{thm:weak-conv}}\label{subsec:proof}

After having bounded the terms (I), (II), and (III)
in the partition~\eqref{e:err-partition} of the weak error
in the previous subsections,
we now turn to estimating the final error term (IV).
Furthermore, 
we bound the $p$-th moment of $\widetilde{Y}(t)$,
where $\widetilde{Y}$ is the solution process
of~\eqref{e:spde-Ytilde}.
We then combine all our results
and prove Theorem~\ref{thm:weak-conv}.

\begin{lemma}\label{l:err-IV}
Let Assumptions~\ref{ass:all}\ref{ass:Vh-1}--\ref{ass:L} be satisfied.
Then there exists a constant $C>0$, independent of $h$, such that
\begin{align*}
    \bigl| \tr\bigl( T \bigl( \Lhtilde \LhtildeT - L^{-2\beta} \bigr) \bigr) \bigr|
        \leq C h^{\min\{d(2\alpha\beta-1),r,s\}} \norm{T}{\cL(H)}
\end{align*}
for every self-adjoint $T \in \cL(H)$ and sufficiently small $h\in(0,h_0)$.
\end{lemma}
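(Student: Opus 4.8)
The plan is to reduce the trace $\tr(T(\Lhtilde\LhtildeT - L^{-2\beta}))$ to a sum over the eigenbasis and then estimate the resulting operator difference in an appropriate Hilbert--Schmidt-type sense. Using~\eqref{e:ptilde-e}, I would first observe that $\Lhtilde = L_h^{-\beta}\ptilde$ acts as $\Lhtilde e_j = \lambda_{j,h}^{-\beta} e_{j,h}$ for $j\leq N_h$ and annihilates $e_j$ for $j>N_h$, so that
\begin{align*}
    \tr\bigl( T \bigl( \Lhtilde \LhtildeT - L^{-2\beta} \bigr) \bigr)
    = \sum_{j=1}^{N_h} \scalar{T \lambda_{j,h}^{-\beta} e_{j,h}, \lambda_{j,h}^{-\beta} e_{j,h}}{H}
        - \sum_{j\in\bbN} \lambda_j^{-2\beta} \scalar{T e_j, e_j}{H}.
\end{align*}
I would split this into the discrepancy on the finite range $j\leq N_h$ and the tail contribution from $j>N_h$. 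For every self-adjoint $T\in\cL(H)$ one has $|\scalar{Tx,y}{H}| \leq \norm{T}{\cL(H)}\norm{x}{H}\norm{y}{H}$, so the whole expression is controlled by $\norm{T}{\cL(H)}$ times a sum of terms of the form $\norm{\lambda_{j,h}^{-\beta}e_{j,h} - \lambda_j^{-\beta}e_j}{H}$ paired against $\lambda_{j,h}^{-\beta}e_{j,h}$ and $\lambda_j^{-\beta}e_j$, plus the tail $\sum_{j>N_h}\lambda_j^{-2\beta}$.

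The core estimate is therefore to bound $\norm{\lambda_{j,h}^{-\beta}e_{j,h} - \lambda_j^{-\beta}e_j}{H}$. I would write this as $(\lambda_{j,h}^{-\beta}-\lambda_j^{-\beta})e_{j,h} + \lambda_j^{-\beta}(e_{j,h}-e_j)$ and treat the two pieces separately. For the eigenvalue factor, the elementary inequality $|\lambda_{j,h}^{-\beta}-\lambda_j^{-\beta}| \lesssim \lambda_j^{-\beta-1}(\lambda_{j,h}-\lambda_j)$ combined with Assumption~\ref{ass:all}\ref{ass:Vh-2} (giving $\lambda_{j,h}-\lambda_j \leq C_1 h^r \lambda_j^q$) and the growth~\eqref{e:ass:lambdaj} yields a bound of the form $h^r \lambda_j^{q-\beta-1} \lesssim h^r j^{\alpha(q-\beta-1)}$. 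For the eigenvector factor, Assumption~\ref{ass:all}\ref{ass:Vh-2} gives $\norm{e_{j,h}-e_j}{H}\leq C_2^{1/2} h^s \lambda_j^{q/2}$, so this piece is bounded by $h^s \lambda_j^{-\beta+q/2} \lesssim h^s j^{\alpha(q/2-\beta)}$. Summing these against the $\lambda_j^{-\beta}\lesssim j^{-\alpha\beta}$ weight over $1\leq j\leq N_h\propto h^{-d}$, and invoking Assumption~\ref{ass:all}\ref{ass:L} (which forces $\alpha(q-1)d\leq r$ and $\alpha qd\leq 2s$, ensuring the exponents combine favorably), produces contributions of order $h^r$ and $h^s$ respectively, up to the geometric summation over $j$ whose worst case is controlled by $N_h^{(\,\cdot\,)}\propto h^{-d(\,\cdot\,)}$.

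The remaining piece is the tail $\sum_{j>N_h}\lambda_j^{-2\beta}$, which by~\eqref{e:ass:lambdaj} is bounded by $c_\lambda^{-2\beta}\sum_{j>N_h} j^{-2\alpha\beta}$; since $2\alpha\beta>1$ this tail is of order $N_h^{1-2\alpha\beta}\propto h^{d(2\alpha\beta-1)}$, supplying the $h^{d(2\alpha\beta-1)}$ term in the final rate. Combining the three contributions gives an overall bound of order $h^{\min\{d(2\alpha\beta-1),r,s\}}$ times $\norm{T}{\cL(H)}$, as claimed. The main obstacle I anticipate is the bookkeeping in the finite-range sum: one must verify that the $j$-dependent exponents, after multiplication by the $\lambda_j^{-\beta}$ weights and summation up to $N_h$, never dominate the tail rate $h^{d(2\alpha\beta-1)}$. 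This is precisely where the constraint $\alpha\leq\min\{r/((q-1)d),\,2s/(qd)\}$ in Assumption~\ref{ass:all}\ref{ass:L} is essential, and I would track the exponents carefully to confirm that the geometric factor $h^{-d}$ coming from $N_h$ is absorbed into the $h^r$ and $h^s$ prefactors.
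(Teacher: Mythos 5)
Your proposal is correct and follows essentially the same route as the paper: diagonalizing the trace over the eigenbasis, reducing everything (via self-adjointness of $T$) to the single quantity $\norm{\lambda_{j,h}^{-\beta}e_{j,h}-\lambda_j^{-\beta}e_j}{H} = \norm{(\Lhtilde-L^{-\beta})e_j}{H}$, bounding it through the mean value theorem and Assumption~\ref{ass:all}\ref{ass:Vh-2}, summing over $j\leq N_h$ using the exponent constraints of Assumption~\ref{ass:all}\ref{ass:L}, and treating the tail $\sum_{j>N_h}\lambda_j^{-2\beta}\lesssim h^{d(2\alpha\beta-1)}$ separately. The exponent bookkeeping you flag as the remaining obstacle does go through exactly as you anticipate, so nothing essential is missing.
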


\begin{proof}
Similarly to~\eqref{e:trTQ-Lhtilde}
we use the self-adjointness of $T$ and rewrite the trace as
$\tr(T(\Lhtilde \LhtildeT - L^{-2\beta})) = S_1 + S_2$,
where
\begin{align*}
    S_1 := \sum_{j\in\bbN} \scalar{T(\Lhtilde - L^{-\beta})e_j, \Lhtilde e_j}{H},
    \qquad
    S_2 := \sum_{j\in\bbN} \scalar{T(\Lhtilde - L^{-\beta})e_j, L^{-\beta} e_j}{H}.
\end{align*}
In order to estimate the terms $S_1$ and $S_2$, we
note that for $j\in\{1,\ldots,N_h\}$
\begin{align*}
    \norm{(\Lhtilde - L^{-\beta}) e_j}{H}
        &= \norm{ \lambda_{j,h}^{-\beta} e_{j,h} - \lambda_j^{-\beta} e_j}{H}
         \leq | \lambda_{j,h}^{-\beta} - \lambda_j^{-\beta} | + \lambda_j^{-\beta} \norm{e_{j,h} - e_j}{H}.
\end{align*}
By the mean value theorem, the existence of $\widetilde{\lambda}_j\in(\lambda_j,\lambda_{j,h})$ satisfying
$\lambda_{j}^{-\beta} - \lambda_{j,h}^{-\beta} = \beta \widetilde{\lambda}_j^{-\beta-1} (\lambda_{j,h} - \lambda_j)$
is ensured.
By Assumption~\ref{ass:all}\ref{ass:Vh-2} we thus have
\begin{align}\label{e:LhtildeLbeta}
    \norm{(\Lhtilde - L^{-\beta}) e_j}{H}
        &\leq \max\bigl\{\beta C_1, \sqrt{C_2} \bigr\} \left( h^r \lambda_j^{q-\beta-1} +  h^s \lambda_j^{\frac{q}{2} - \beta} \right).
\end{align}

Owing to~\eqref{e:ptilde-e} the series $S_1$ simplifies to the finite sum
\begin{align*}
    S_1 = \sum_{j=1}^{N_h} \lambda_{j,h}^{-\beta} \, \scalar{T(\Lhtilde - L^{-\beta}) e_j, e_{j,h}}{H}.
\end{align*}
Using~\eqref{e:LhtildeLbeta}
as well as Assumptions~\ref{ass:all}\ref{ass:Vh-1}--\ref{ass:L},
this sum can be bounded by
\begin{align*}
    |S_1|
        &\lesssim \norm{T}{\cL(H)} \sum_{j=1}^{N_h} \left( h^r \lambda_j^{q-2\beta-1} + h^s  \lambda_j^{\frac{q}{2}-2\beta} \right)
         \lesssim h^{\min\{d(2\alpha\beta-1),r,s\}} \norm{T}{\cL(H)},
\end{align*}
since $d \alpha (q-1) \leq r$ and $d \alpha q/2 \leq s$ by~Assumption~\ref{ass:all}\ref{ass:L}.

For the second term we find
\begin{align*}
    S_2
        = \sum_{j = 1}^{N_h} \lambda_{j}^{-\beta} \scalar{T(\Lhtilde - L^{-\beta}) e_j, e_j}{H}
            - \sum_{j > N_h} \lambda_{j}^{-2\beta} \scalar{T e_j, e_j}{H},
\end{align*}
since $\Lhtilde e_j = 0$ for $j>N_h$ by~\eqref{e:ptilde-e}.
Therefore, the application of \eqref{e:LhtildeLbeta} yields
\begin{align*}
    |S_2|
        &\lesssim \norm{T}{\cL(H)}
            \Biggl( \sum_{j=1}^{N_h} \left( h^r \lambda_{j}^{q-2\beta-1} + h^s \lambda_{j}^{\frac{q}{2}-2\beta} \right)
                + \sum_{j>N_h} \lambda_{j}^{-2\beta} \Biggr)
\end{align*}
and $|S_2| \lesssim h^{\min\{d(2\alpha\beta-1),r,s\}} \norm{T}{\cL(H)}$ follows
from Assumptions~\ref{ass:all}\ref{ass:Vh-1},~\ref{ass:L}.
\end{proof}

\begin{lemma}\label{l:Ytildemoments}
Suppose that Assumptions~\ref{ass:all}\ref{ass:Vh-1}--\ref{ass:L} are satisfied.
Let $p\in\bbN$, $t\in[0,1]$, and $\widetilde{Y}$ be the strong solution
of~\eqref{e:spde-Ytilde}.
Then the $p$-th moment of $\widetilde{Y}(t)$ exists
and, for $p \geq 2$, it admits the following bound:
\begin{align*} 
    \bbE\bigl[ \norm{\widetilde{Y}(t)}{H}^p \bigr]
        &\leq C \Bigl( 1 + e^{-\frac{p \pi^2}{2k}} h^{-\frac{pd}{2}} + \norm{g}{H}^p \Bigr),
\end{align*}
where
the constant $C>0$ is independent of $h$ and $k$.
\end{lemma}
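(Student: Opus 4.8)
The plan is to mimic the structure of the proof of Lemma~\ref{l:Ymoments}, since $\widetilde{Y}$ solves the analogous additive-noise equation~\eqref{e:spde-Ytilde} but with the noise filtered through the finite-rank operator $\Qtilde = Q_{h,k}^\beta \ptilde$ and started from the deterministic initial value $\widetilde{Y}(0) = Q_{h,k}^\beta \proj g$. First I would split
\begin{align*}
    \bbE\bigl[ \norm{\widetilde{Y}(t)}{H}^p \bigr]
        \leq 2^{p-1} \Bigl( \norm{Q_{h,k}^\beta \proj g}{H}^p
            + \bbE\bigl[ \norm{\widetilde{Y}(t) - \widetilde{Y}(0)}{H}^p \bigr] \Bigr),
\end{align*}
and handle the deterministic term using the boundedness $\norm{Q_{h,k}^\beta}{\cL(V_h)} \leq C'$ from Lemma~\ref{l:Qhk} together with $\norm{\proj}{\cL(H)} \leq 1$, which yields $\norm{Q_{h,k}^\beta \proj g}{H}^p \lesssim \norm{g}{H}^p$. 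This accounts for the $\norm{g}{H}^p$ contribution in the stated bound.

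The heart of the estimate is the stochastic integral term $\int_0^t \Qtilde \rd W^\beta(s)$. Exactly as in Lemma~\ref{l:Ymoments}, I would write this as a series over the Brownian motions: using $\ptilde e_j = e_{j,h}$ from~\eqref{e:ptilde-e} and $W^\beta(t) = \sum_{j\in\bbN} \lambda_j^{-\beta} B_j(t)\,e_j$, the increment is $\sum_{j=1}^{N_h} \lambda_j^{-\beta} B_j(t)\, Q_{h,k}^\beta e_{j,h}$. Applying Jensen's inequality and the distribution of $\{B_j(t)\}$ as in the $p\geq 3$ case of Lemma~\ref{l:Ymoments}, I expect a bound of the form
\begin{align*}
    \bbE\bigl[ \norm{\widetilde{Y}(t) - \widetilde{Y}(0)}{H}^p \bigr]
        \lesssim t^{p/2} \mu_p \Bigl( \tr\bigl(\Qtilde \QtildeT\bigr) \Bigr)^{p/2}.
\end{align*}
So the task reduces to controlling the trace $\tr(\Qtilde \QtildeT) = \sum_{j=1}^{N_h} \norm{Q_{h,k}^\beta e_{j,h}}{H}^2$.

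For this trace I would use the triangle inequality $\norm{Q_{h,k}^\beta e_{j,h}}{H} \leq \norm{(Q_{h,k}^\beta - L_h^{-\beta})e_{j,h}}{H} + \norm{L_h^{-\beta} e_{j,h}}{H}$. The first piece is bounded by $C e^{-\pi^2/(2k)}$ via Lemma~\ref{l:Qhk}, and summing its square over the $N_h \propto h^{-d}$ eigenvectors gives a contribution $e^{-\pi^2/k} h^{-d}$, whose $p/2$-power produces the term $e^{-p\pi^2/(2k)} h^{-pd/2}$ in the claimed bound. The second piece equals $\lambda_{j,h}^{-\beta}$, and since $\lambda_{j,h} \geq \lambda_j \geq c_\lambda j^\alpha$ by Assumption~\ref{ass:all}\ref{ass:Vh-2} and~\eqref{e:ass:lambdaj}, the sum $\sum_j \lambda_{j,h}^{-2\beta} \leq c_\lambda^{-2\beta}\sum_j j^{-2\alpha\beta}$ converges uniformly in $h$ because $2\alpha\beta > 1$; this yields the $O(1)$ contribution. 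The main obstacle is really just bookkeeping: combining the two pieces inside the $p/2$-power cleanly, for which I would use the elementary inequality $(a+b)^{p/2} \lesssim a^{p/2} + b^{p/2}$ so that the mixed trace terms do not spoil the separated form $1 + e^{-p\pi^2/(2k)} h^{-pd/2}$. The existence of all moments for general $p\in\bbN$ (not just $p\geq 2$) follows from the Gaussianity of $\widetilde{Y}(t)$, exactly as for $Y(t)$.
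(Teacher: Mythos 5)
Your proposal follows essentially the same route as the paper: split $\widetilde{Y}(t)$ into the deterministic initial value and the stochastic integral, bound the former via the uniform bound $\norm{Q_{h,k}^\beta}{\cL(V_h)}\leq C'$ from Lemma~\ref{l:Qhk}, and split the stochastic part into a quadrature-error contribution (yielding $e^{-p\pi^2/(2k)}N_h^{p/2}\propto e^{-p\pi^2/(2k)}h^{-pd/2}$) and an $L_h^{-\beta}$ contribution controlled by $\tr(L^{-2\beta})<\infty$. Two points need repair, however. First, your formula for the increment is wrong: the diffusion coefficient in~\eqref{e:spde-Ytilde} is $Q_{h,k}^\beta\Pbeta$, not $\Qtilde=Q_{h,k}^\beta\ptilde=Q_{h,k}^\beta\Pbeta L^{-\beta}$, so that $\widetilde{Y}(t)-\widetilde{Y}(0)=Q_{h,k}^\beta\Pbeta W^\beta(t)=\sum_{j=1}^{N_h}B_j(t)\,Q_{h,k}^\beta e_{j,h}$ carries no factor $\lambda_j^{-\beta}$. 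Had you followed your formula consistently, the relevant sum would be $\sum_{j=1}^{N_h}\lambda_j^{-2\beta}\norm{Q_{h,k}^\beta e_{j,h}}{H}^2$, which is bounded uniformly in $h$ and $k$, and the term $e^{-p\pi^2/(2k)}h^{-pd/2}$ would never appear. The trace $\tr(\Qtilde\QtildeT)=\sum_{j=1}^{N_h}\norm{Q_{h,k}^\beta e_{j,h}}{H}^2$ that you then estimate \emph{is} the correct squared Hilbert--Schmidt norm of $Q_{h,k}^\beta\Pbeta$ on the reproducing kernel Hilbert space $\Hdot{2\beta}$ of $W^\beta$ (precisely because $\ptilde=\Pbeta L^{-\beta}$), so your downstream computation is right, but it is inconsistent with the increment you wrote down. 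Second, the Jensen step of Lemma~\ref{l:Ymoments} does not transfer verbatim: there the expansion vectors $e_j$ are orthonormal, whereas the vectors $Q_{h,k}^\beta e_{j,h}$ are not, so $\norm{\sum_{j}B_j(t)Q_{h,k}^\beta e_{j,h}}{H}^2$ is not a diagonal sum of the form $\sum_j c_j |B_j(t)|^2$. Either invoke the equivalence of moments for centered $H$-valued Gaussians, $\bbE[\norm{X}{H}^p]\leq C_p\,(\bbE[\norm{X}{H}^2])^{p/2}$, or do as the paper does and split the random variable itself --- not only the trace --- into $(Q_{h,k}^\beta-L_h^{-\beta})\Pbeta W^\beta(t)$ and $L_h^{-\beta}\Pbeta W^\beta(t)$; after factoring out the operator norm of $Q_{h,k}^\beta-L_h^{-\beta}$ in the first piece, each piece reduces to a sum over the orthonormal system $\{e_{j,h}\}$ and the argument of Lemma~\ref{l:Ymoments} applies. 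With these two corrections your argument coincides with the paper's proof.
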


\begin{proof}
Since $P_h^\beta W^\beta(t) = \sum_{j=1}^{N_h} B_j(t) \, e_{j,h}$,
we obtain by Lemma~\ref{l:Qhk},
for any $p\geq 2$, that
\begin{align*}
    \bbE \bigl[ \norm{(Q_{h,k}^\beta - L_h^{-\beta}) P_h^\beta W^\beta(t)}{H}^p \bigr]
        &\leq C^p e^{-\frac{p \pi^2}{2k}} \, \bbE \Bigl| \sum_{j=1}^{N_h} B_j(t)^2 \Bigr|^{\frac{p}{2}}
         \leq C^p e^{-\frac{p \pi^2}{2k}} N_h^{\frac{p}{2}} t^{\frac{p}{2}} \mu_p,
\end{align*}
where, again,
$\mu_p := \bbE[ |Z|^p ]$
denotes the $p$-th absolute moment
of $Z\sim\cN(0,1)$ and
the constant $C>0$ is independent of $h$, $k$, and $p$.
Furthermore, using $0 < \lambda_{j} \leq \lambda_{j,h}$
of Assumption~\ref{ass:all}\ref{ass:Vh-2}
and applying the H\"{o}lder inequality
gives
\begin{align*}
    \bbE \bigl[ \norm{L_h^{-\beta} P_h^\beta W^\beta(t)}{H}^p \bigr]
        &= \bbE \Bigl| \sum_{j=1}^{N_h} \lambda_{j,h}^{-2\beta} B_j(t)^2 \Bigr|^{\frac{p}{2}}
        \leq \tr(L^{-2\beta})^{\frac{p}{2}} t^{\frac{p}{2}} \mu_p,
\end{align*}
where $\tr(L^{-2\beta}) < \infty$ by Assumption~\ref{ass:all}\ref{ass:L}.
Thus, we obtain for the solution $\widetilde{Y}$ of~\eqref{e:spde-Ytilde}
that for any $t\in[0,1]$ the bound
\begin{align*}
    \bbE\bigl[ &\norm{\widetilde{Y}(t)}{H}^p \bigr]
         = \bbE\bigl[ \norm{Q_{h,k}^\beta \proj g + (Q_{h,k}^\beta - L_h^{-\beta}) P_h^\beta W^\beta(t) + L_h^{-\beta} P_h^\beta W^\beta(t)}{H}^p \bigr] \\
        &\leq 3^{p-1} \Bigl( \norm{Q_{h,k}^\beta \proj g}{H}^p
                + \bbE\bigl[ \norm{(Q_{h,k}^\beta - L_h^{-\beta}) P_h^\beta W^\beta(t)}{H}^p \bigr]
                + \bbE\bigl[ \norm{L_h^{-\beta} P_h^\beta W^\beta(t)}{H}^p \bigr] \Bigr) \\
        &\leq 3^{p-1} \Bigl(\norm{Q_{h,k}^\beta}{\cL(V_h)}^p \norm{g}{H}^p
            + C^p e^{-\frac{p \pi^2}{2k}} N_h^{\frac{p}{2}} t^{\frac{p}{2}} \mu_p
            + \tr(L^{-2\beta})^{\frac{p}{2}} t^{\frac{p}{2}} \mu_p \Bigr)
\end{align*}
holds. Finally, the assertion follows by
the boundedness of $Q_{h,k}^\beta$ which is uniform in $h$ and $k$,
the finiteness of $\tr(L^{-2\beta})$, and
Assumption~\ref{ass:all}\ref{ass:Vh-1}.
\end{proof}

\begin{proof}[Proof of Theorem~\ref{thm:weak-conv}]
Owing to the partition~\eqref{e:err-partition}
and the estimates of the error terms (I)--(IV)
in Lemmata~\ref{l:err-I} and \ref{l:err-II}--\ref{l:err-IV}
we can bound the weak error as follows
\begin{align*}
    \bigl| \bbE[\varphi(u)] &- \bbE[\varphi(u_{h,k}^Q)] \bigr|
         \lesssim \left( h^{\min\{d(2\alpha\beta-1),s\}} + e^{-\frac{\pi^2}{2k}} \right) \norm{g}{\theta} (1 + \norm{g}{H}^{p+1}) \\
        &\quad + \sup_{t\in [0,1]} \bbE\bigl[\norm{w_{xx}(t,\widetilde{Y}(t))}{\cL(H)} \bigr]
                \left( e^{-\frac{\pi^2}{k}} h^{-d} + e^{-\frac{\pi^2}{2k}} +  e^{-\frac{\pi^2}{2k}} f_{\alpha,\beta}(h) \right) \\
        &\quad + \sup_{t\in [0,1]} \bbE\bigl[\norm{w_{xx}(t,\widetilde{Y}(t))}{\cL(H)} \bigr] \,
                h^{\min\{d(2\alpha\beta-1),r,s\}},
\end{align*}
since $w_{xx}(t,x)\in\cL(H)$ is self-adjoint for every $t\in[0,1]$ and $x\in H$.
The application of Lemma~\ref{l:w-phi} and of the tower property for conditional expectations yield
\begin{align*}
    \bbE[\norm{w_{xx}(t,\widetilde{Y}(t))}{\cL(H)}]
        &= \bbE[ \norm{ \bbE[ D^2 \varphi(\widetilde{Y}(t) + Y(1) - Y(t)) | \cF_t ] }{\cL(H)} ] \\
        &\leq \bbE[ \norm{ D^2 \varphi(\widetilde{Y}(t) + Y(1) - Y(t)) }{\cL(H)} ].
\end{align*}
By the polynomial growth~\eqref{e:ass:phi-poly}
of $D^2 \varphi$ and the boundedness of the $p$-th moments
of~$Y(t)$ and $\widetilde{Y}(t)$ in Lemmata~\ref{l:Ymoments}
and~\ref{l:Ytildemoments}, respectively, we obtain that
\begin{align*}
    \bbE[\norm{w_{xx}(t,\widetilde{Y}(t))}{\cL(H)}]
        &\lesssim \left(1 + \bbE[\norm{\widetilde{Y}(t)}{H}^p]
            + \bbE[\norm{Y(1)}{H}^p]
            + \bbE[\norm{Y(t)}{H}^p]  \right) \\
        &\lesssim \Bigl(1 + e^{-\frac{p\pi^2}{2k}} h^{-\frac{pd}{2}} + \norm{g}{H}^p \Bigr),
\end{align*}
since $\tr(L^{-2\beta}) < \infty$. 
This completes the proof of the weak error estimate~\eqref{e:err-weak}.
\end{proof}

\begin{remark}
Note that, if the first and second Fr\'echet derivatives of $\varphi$
are bounded, the estimates of the Lemmata~\ref{l:Ymoments}
and~\ref{l:Ytildemoments}
are not needed and the weak error estimate in~\eqref{e:err-weak}
simplifies to
\begin{align*}
    \bigl| \bbE[\varphi(u)] &- \bbE[\varphi(u_{h,k}^{Q})] \bigr| \\
         &\leq C
            \Bigl( h^{\min\{d(2\alpha\beta-1),r,s\}} + e^{-\frac{\pi^2}{k}} h^{-d}
                + e^{-\frac{\pi^2}{2k}} + e^{-\frac{\pi^2}{2k}} f_{\alpha,\beta}(h) \Bigr)
                (1 + \norm{g}{\theta} ).
\end{align*}
The calibration of the discretization parameters $k$ and $h$
remains as described in Remark~\ref{remark:calibration}.
\end{remark}

\section{An application and numerical experiments}\label{section:numexp}

In this section we validate the theoretical results
of the previous sections within the scope of a simulation study
based on the model for Mat\'ern approximations in \eqref{e:matern}
on the domain $\cD = (0,1)^d$ for $d=1,2$, $\kappa = 0.5$,
and $u=0$ on $\partial\cD$, i.e., $L=\kappa^2 - \Delta$
with homogeneous Dirichlet boundary conditions.
In this case, the operator $L$
has the following eigenvalue-eigenvector pairs
\cite[Ch.~VI.4]{courant1962}:
\begin{align}\label{e:eig:laplace}
    \lambda_{\jvec} = \kappa^2 + \pi^2 |\jvec|^2 = \kappa^2 + \pi^2 \sum_{i=1}^{d} j_i^2,
    \qquad
    e_{\jvec}(\xvec)
        = \prod_{i=1}^{d} \left( \sqrt{2} \sin(\pi j_i \, x_i) \right),
\end{align}
where $\jvec=(j_1,\ldots,j_d)\in\bbN^d$
is a $d$-dimensional multi-index.
As already mentioned in Example~\ref{ex:matern},
these eigenvalues satisfy~\eqref{e:ass:lambdaj} for $\alpha = 2/d$.

Note that for every $\xvec\in\cD$ the solution
$u$ satisfies $u(\xvec) \sim \cN(0,\sigma(\xvec)^2)$.
Following a Karhunen--Lo\`{e}ve expansion of $u$ with
respect to the eigenfunctions $\{e_{\jvec}\}_{\jvec\in\bbN^d}$
in~\eqref{e:eig:laplace},
the variance $\sigma(\xvec)^2$ can be expressed explicitly
in terms of the eigenvalues and eigenfunctions in~\eqref{e:eig:laplace} by
\begin{align}\label{e:sigma2}
\sigma(\xvec)^2
    &= \bbE\Bigl|\sum_{\jvec \in \bbN^d}
        \lambda_{\jvec}^{-\beta} \widetilde{\xi}_{\jvec} \, e_{\jvec}(\xvec) \Bigr|^2
    = \sum_{\jvec \in \bbN^d}
        \lambda_{\jvec}^{-2\beta} e_{\jvec}(\xvec)^2 ,
\end{align}
where $\bigl\{\widetilde{\xi}_{\jvec}\bigr\}_{\jvec\in\bbN^d}$ are independent
$\cN(0,1)$-distributed random variables.

Considering continuous evaluation functions $\varphi\from L_2(\cD) \to \bbR$ of the form
\begin{align*}
    \varphi(u) = \int_{\cD} f(u(\xvec)) \rd \xvec
\end{align*}
allows us to perform the simulation study
without Monte Carlo sampling, since
\begin{align*}
    \bbE[ \varphi(u) ] &= \int_{\cD} \bbE[ f( u(\xvec) ) ] \rd \xvec,
\end{align*}
and the value of $\bbE[f(u(\xvec))]$ can be derived analytically
from $u(\xvec) \sim \cN(0,\sigma(\xvec)^2)$.
More precisely, we choose $f(u) = |u|^p$,
$p=2,3,4$, as well as $f(u) = \Phi(20(u-0.5))$,
where $\Phi(\cdot)$ denotes
the cumulative distribution function for the standard normal distribution.
The motivation of the latter function is given
by its correspondence to a probit transform
which is often used to approximate step functions (see, e.g.,~\cite{barman2017three}),
in this case $\mathds{1}(u>0.5)$.
These four functions satisfy Assumption~\ref{ass:all}\ref{ass:phi}
and we obtain for the quantity of interest,
\begin{align}\label{e:Ephi1}
\bbE[ \varphi(u) ]
     = \tfrac{2^{p/2}\Gamma((p+1)/2)}{\sqrt{\pi}} \int_{\cD} \sigma(\xvec)^p \rd \xvec,
\end{align}
if  $f(u) = |u|^p$, and
\begin{align}\label{e:Ephi2}
\bbE[ \varphi(u) ]
    = \int_{\cD} \Phi\left( - \tfrac{a}{\sqrt{c^{-2} + \sigma(\xvec)^2}}\right) \rd \xvec,
\end{align}
if $f(u) = \Phi(c(u-a))$ for $a\in\bbR$ and $c>0$.

We truncate the series in \eqref{e:sigma2}
in order to approximate the variance $\sigma(\xvec)^2$,
\begin{align*}
\sigma(\xvec)^2
    &\approx \sum_{j_1=1}^{\Nok}\cdots\sum_{j_d=1}^{\Nok}\lambda_{(j_1,\ldots,j_d)}^{-2\beta}e_{(j_1,\ldots,j_d)}(\xvec)^2.
\end{align*}
Here, we choose $\Nok = 1 + 2^{18}$ for $d=1$
and $\Nok = 1 + 2^{11}$ for $d=2$
so that, in both cases, $\Nok^d \gg N_h$
for all considered finite element spaces
with $N_h$ basis functions.
This estimate of $\sigma(\xvec)$ is used
at $\Nok^d$ equally spaced locations $\xvec\in\cD$,
and the reference solution $\bbE[\varphi(u)]$ is then approximated
by applying the trapezoidal rule in order to
evaluate the integrals in \eqref{e:Ephi1}
and \eqref{e:Ephi2} numerically.

\begin{table}[t]
\centering
\caption{\label{tab:nodenumber}Numbers of finite element basis functions
and the corresponding numbers of quadrature nodes as a function of $\beta$}
\begin{tabular}{lccccc}
\toprule
& & \multicolumn{4}{c}{$\beta$} \\
& $N_h$ & $0.6$  & $0.7$ & $0.8$ & $0.9$ \\
\cmidrule(r){2-6}
\multirow{4}{*}{$d=1$} 	& 511   	& 146  	&  226  	&  386  & 866  \\
				  	& 1023   	& 180 	&  278  	& 476  & 1069 \\  					
				  	& 2047   	& 218  	&  337  	& 576  & 1293 \\ 					
				  	& 4095  	& 258 	& 400	& 685  & 1538 \\ 	
\cmidrule(r){1-6}  				  				
\multirow{4}{*}{$d=2$} 	& 225  	& 24   	& 36    	& 60   &  133  \\
					& 961  	& 38   	& 58    	& 98   & 218 \\
					& 3969 	& 56   	& 86    	& 145 & 325 \\
					& 16129 	& 78 		& 119  	& 203 & 453 \\					
\bottomrule
\end{tabular}
\end{table}

We consider \eqref{e:matern} for $\beta = 0.6,0.7,0.8,0.9$
and use a finite element discretization based on
continuous piecewise linear basis functions
with respect to uniform meshes on $\bar{\cD} = [0,1]^d$.
We use four different mesh sizes $h$ in each dimension $d=1,2$,
and calibrate the quadrature step size $k$
with $h$ for each value of $\beta$
by $k = -1/(\beta\ln h)$.
This results in the numbers of basis functions
and quadrature nodes shown in Table~\ref{tab:nodenumber}.
As already pointed out in Example \ref{ex:matern},
the growth exponent of the eigenvalues
is in this case $\alpha=2/d$,
and Assumption \ref{ass:all} is satisfied for $r=s=q=2$.
This gives the theoretical value $\min\{4\beta-d,2\}$
for the weak convergence rate.

For the computation of $\bbE[\varphi(u_{h,k}^Q)]$
we can use the same procedure as
for the reference solution in order to avoid Monte Carlo simulations.
For this purpose, we have to replace $\sigma(\xvec)^2$
in~\eqref{e:Ephi1} and~\eqref{e:Ephi2}
by the variance of the finite element solution,
$\sigma_{h}(\xvec)^2 = \operatorname{Var}(u_{h,k}^Q(\xvec))$.
To this end, we first assemble the matrix
\begin{align*}
    \Qmat_{h,k}^{\beta}
        = \frac{2 k \sin(\pi\beta)}{\pi} \sum_{\ell =-K^{-}}^{K^{+}} e^{2\beta y_{\ell}} (\Mmat + e^{2 y_{\ell}}(\kappa^2\Mmat + \Smat))^{-1},
\end{align*}
where $y_\ell := \ell k$
and $\Mmat, \Smat \in\bbR^{N_h\times N_h}$
are the mass matrix and the stiffness matrix
with respect to the finite element basis $\{\phi_{j,h}\}_{j=1}^{N_h}$
with entries
\begin{align*}
    M_{ij} := \scalar{\phi_{i,h}, \phi_{j,h}}{L_2(\cD)},
    \quad
    S_{ij} := \scalar{\nabla \phi_{i,h}, \nabla \phi_{j,h}}{L_2(\cD)},
    \quad
    1\leq i,j, \leq N_h.
\end{align*}
If we let
$\phivec_h(\xvec) := (\phi_{1,h}(\xvec),\ldots, \phi_{N_h,h}(\xvec))^{\T}$
denote the vector of the finite element basis functions evaluated at $\xvec\in\cD$
and $\bvec := (\scalar{\white_h^\Phi, \phi_{j,h}}{L_2(\cD)})_{j=1}^{N_h} \sim \cN(\mathbf{0},\Mmat)$,
the variance $\sigma_h(\xvec)^2$ is given by
\begin{align*}
    \sigma_h(\xvec)^2
        = \operatorname{Var}(u_{h,k}^Q(\xvec))
        = \operatorname{Var}\Bigl( \phivec_h(\xvec)^{\T} \Qmat_{h,k}^{\beta} \bvec \Bigr)
        = \phivec_h(\xvec)^\T\Qmat_{h,k}^{\beta}\Mmat(\Qmat_{h,k}^{\beta})^\T\phivec_h(\xvec).
\end{align*}
The computation of $\sigma_h(\xvec)^2$
at the same $\Nok^d$ locations as for the reference solution
again enables a numerical evaluation of
the integrals in~\eqref{e:Ephi1} and~\eqref{e:Ephi2}
via the trapezoidal rule
for approximating $\bbE[\varphi(u_{h,k}^Q)]$.

\begin{figure}[p]
\begin{center}
\begin{minipage}[t]{0.405\linewidth}
\begin{center}
$f(u) = |u|^2$, $d=1$
\includegraphics[width=\linewidth]{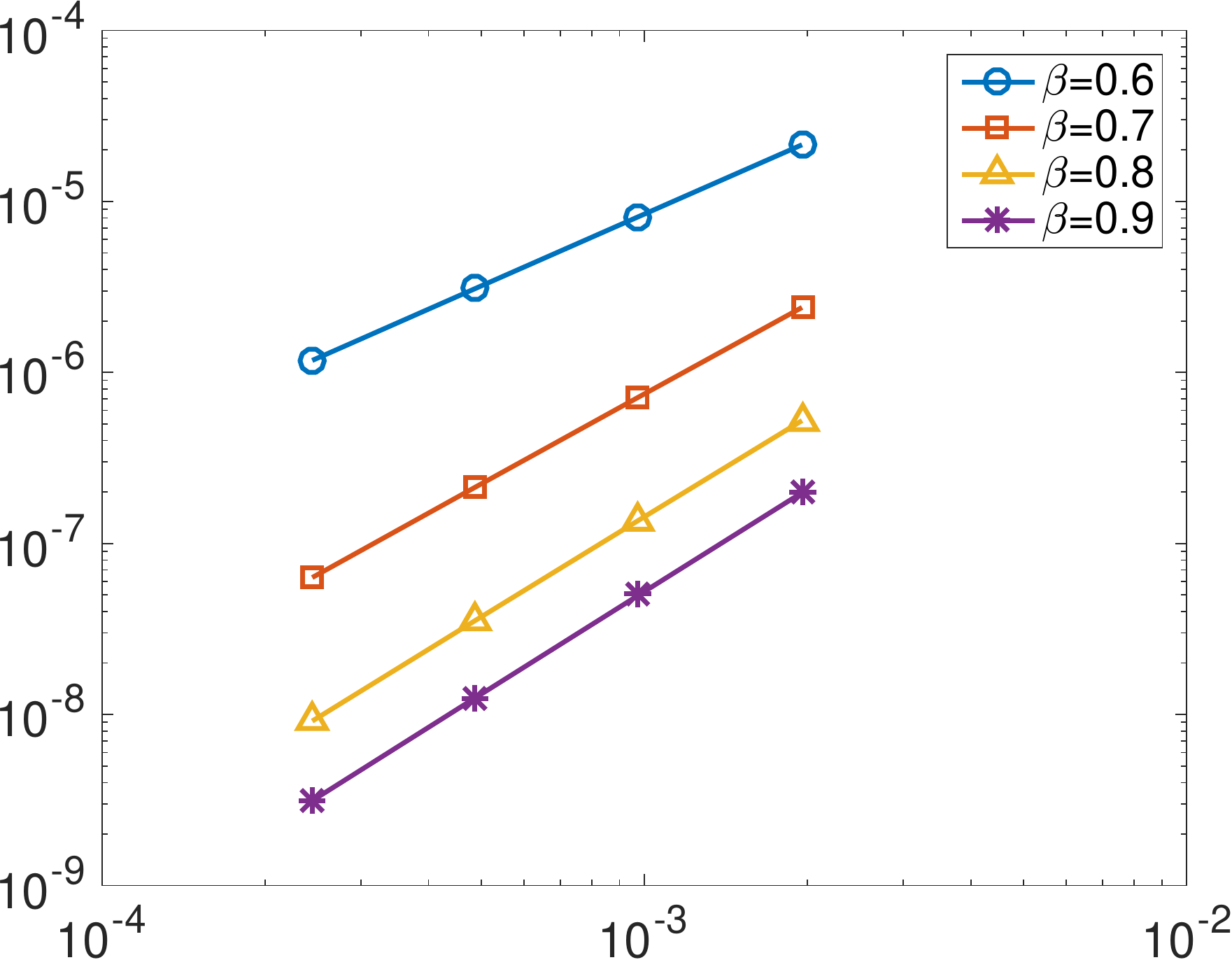}
\end{center}
\end{minipage}
\hspace*{1cm}
\begin{minipage}[t]{0.405\linewidth}
\begin{center}
$f(u) = |u|^2$, $d=2$
\includegraphics[width=\linewidth]{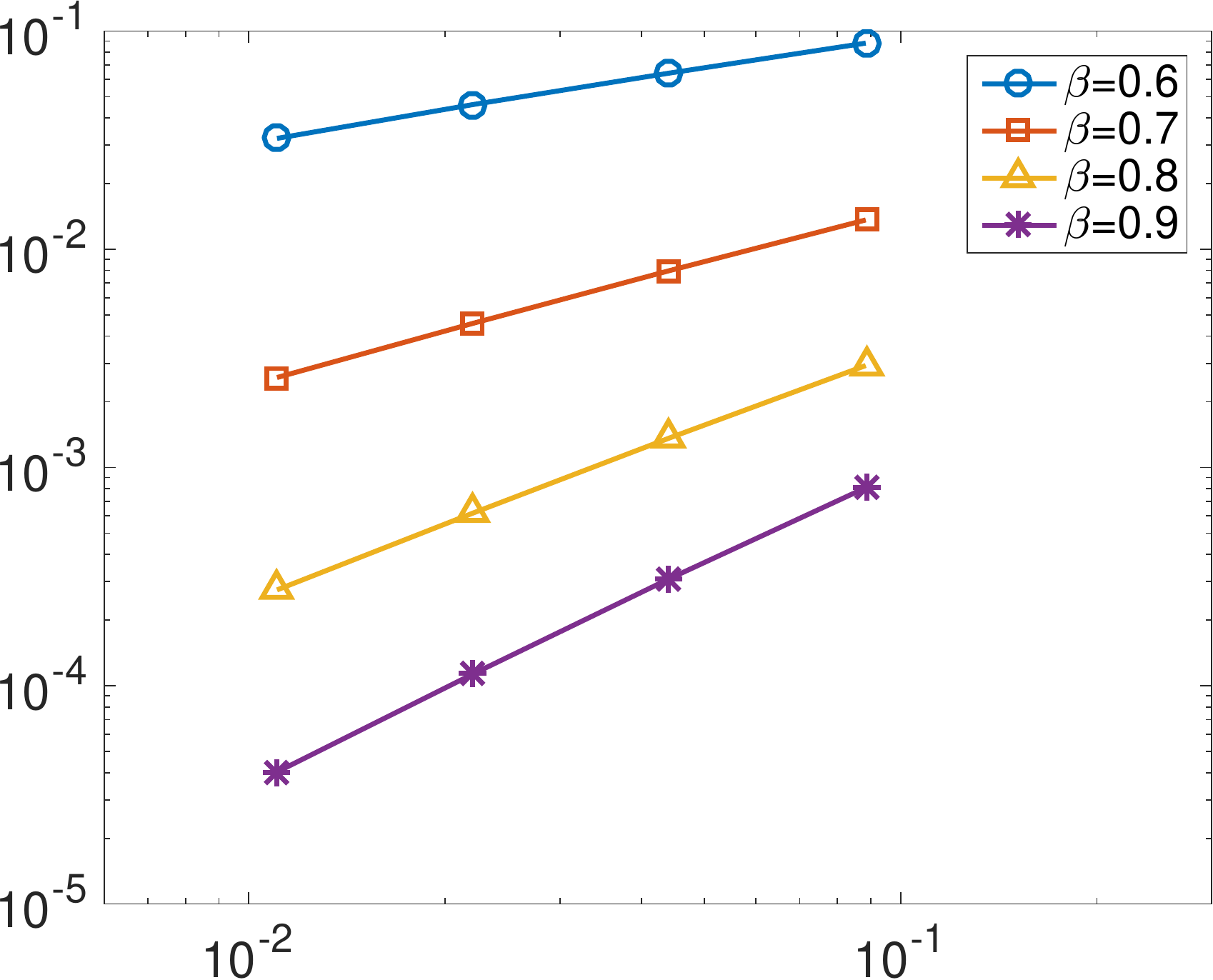}
\end{center}
\end{minipage}
\begin{minipage}[t]{0.405\linewidth}
\begin{center}
$f(u) = |u|^3$, $d=1$
\includegraphics[width=\linewidth]{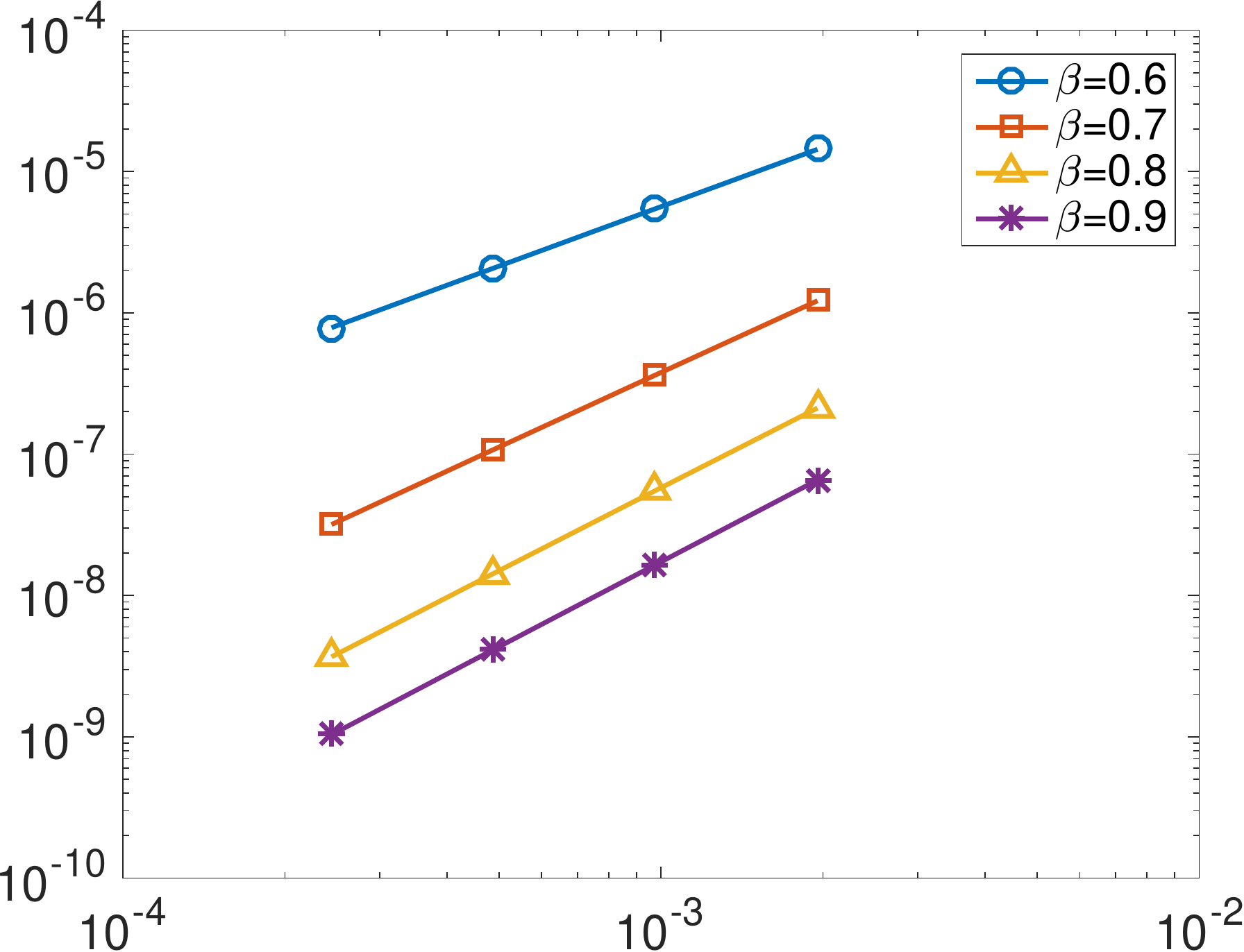}
\end{center}
\end{minipage}
\hspace*{1cm}
\begin{minipage}[t]{0.405\linewidth}
\begin{center}
$f(u) = |u|^3$, $d=2$
\includegraphics[width=\linewidth]{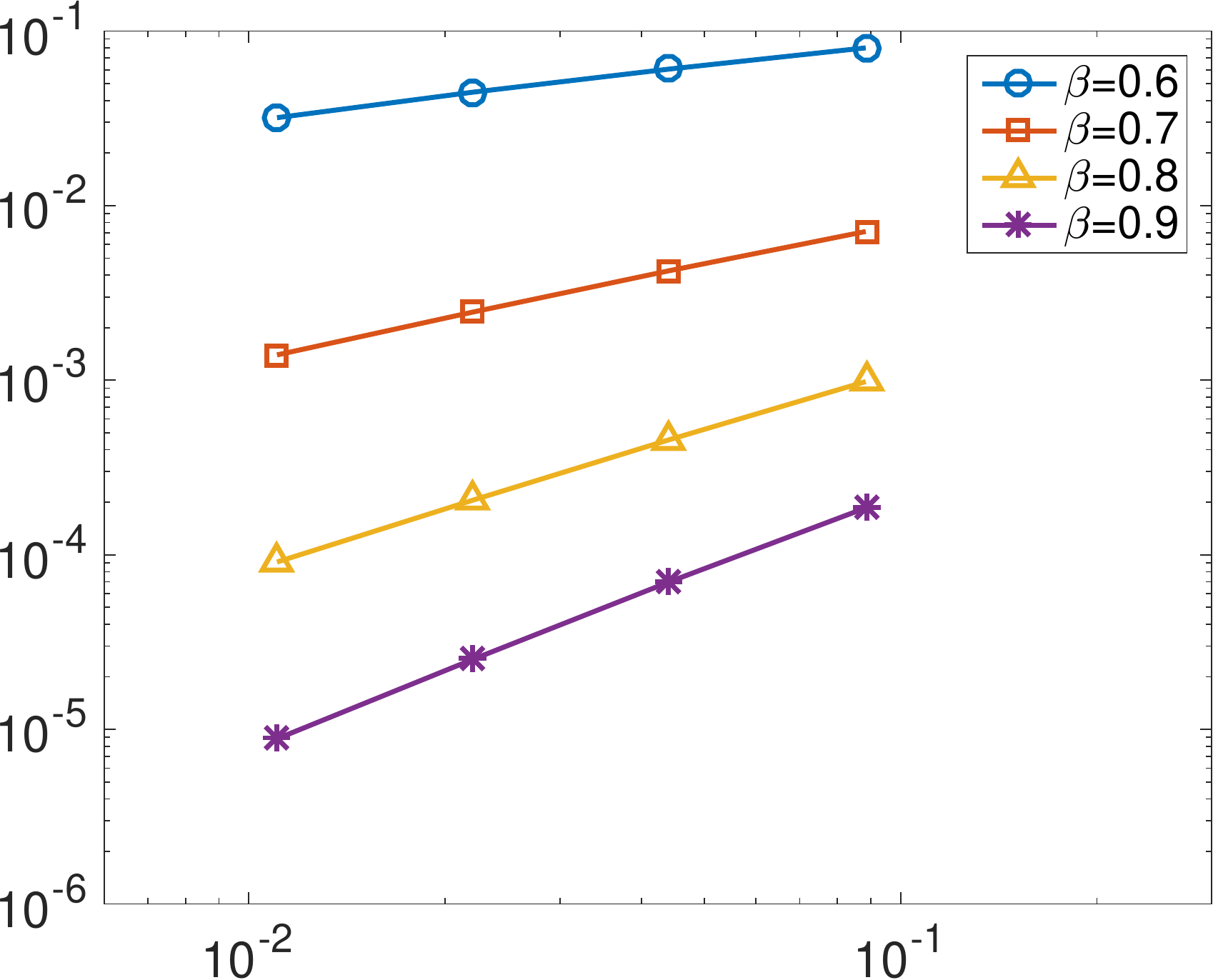}
\end{center}
\end{minipage}
\begin{minipage}[t]{0.405\linewidth}
\begin{center}
$f(u) = |u|^4$, $d=1$
\includegraphics[width=\linewidth]{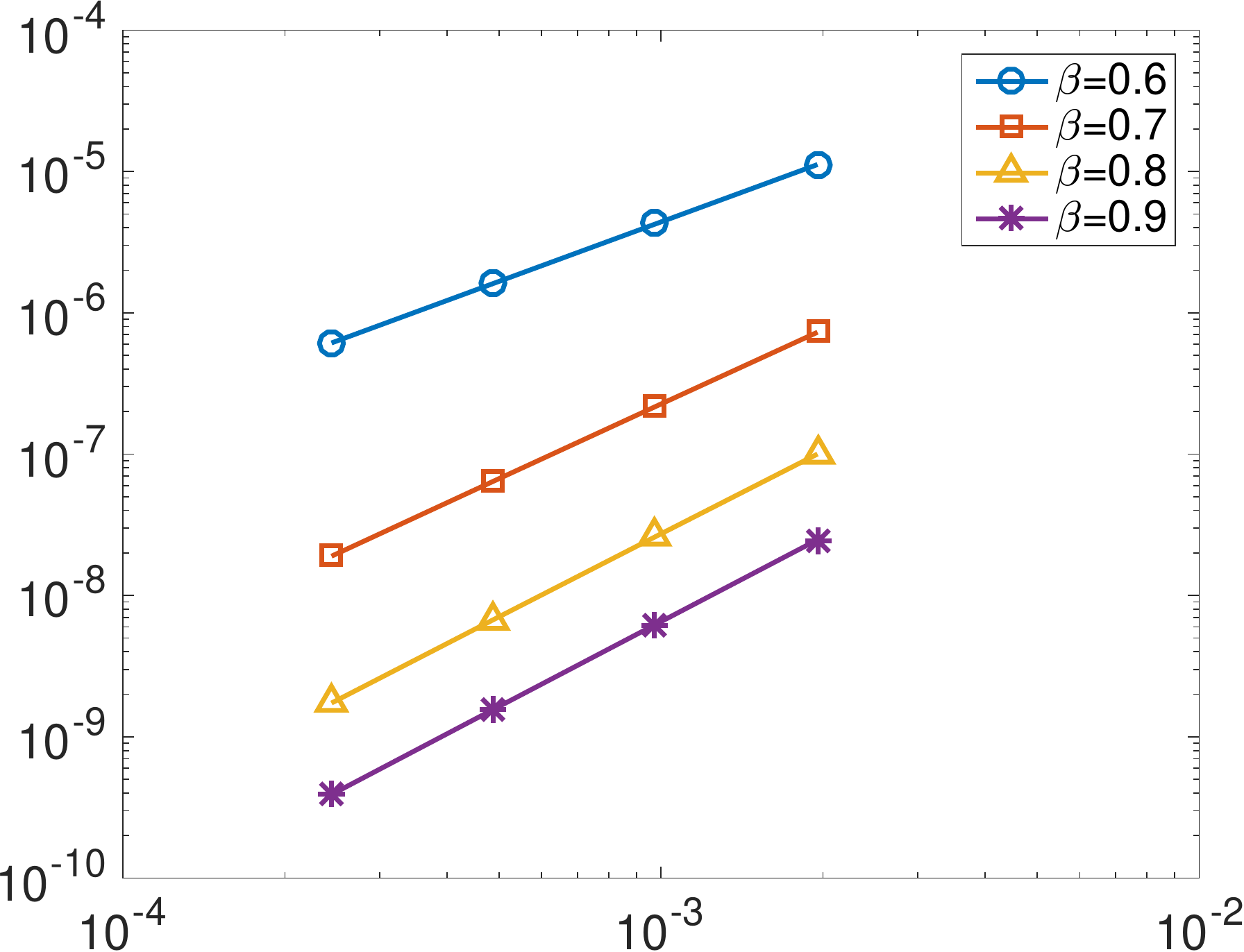}
\end{center}
\end{minipage}
\hspace*{1cm}
\begin{minipage}[t]{0.405\linewidth}
\begin{center}
$f(u) = |u|^4$, $d=2$
\includegraphics[width=\linewidth]{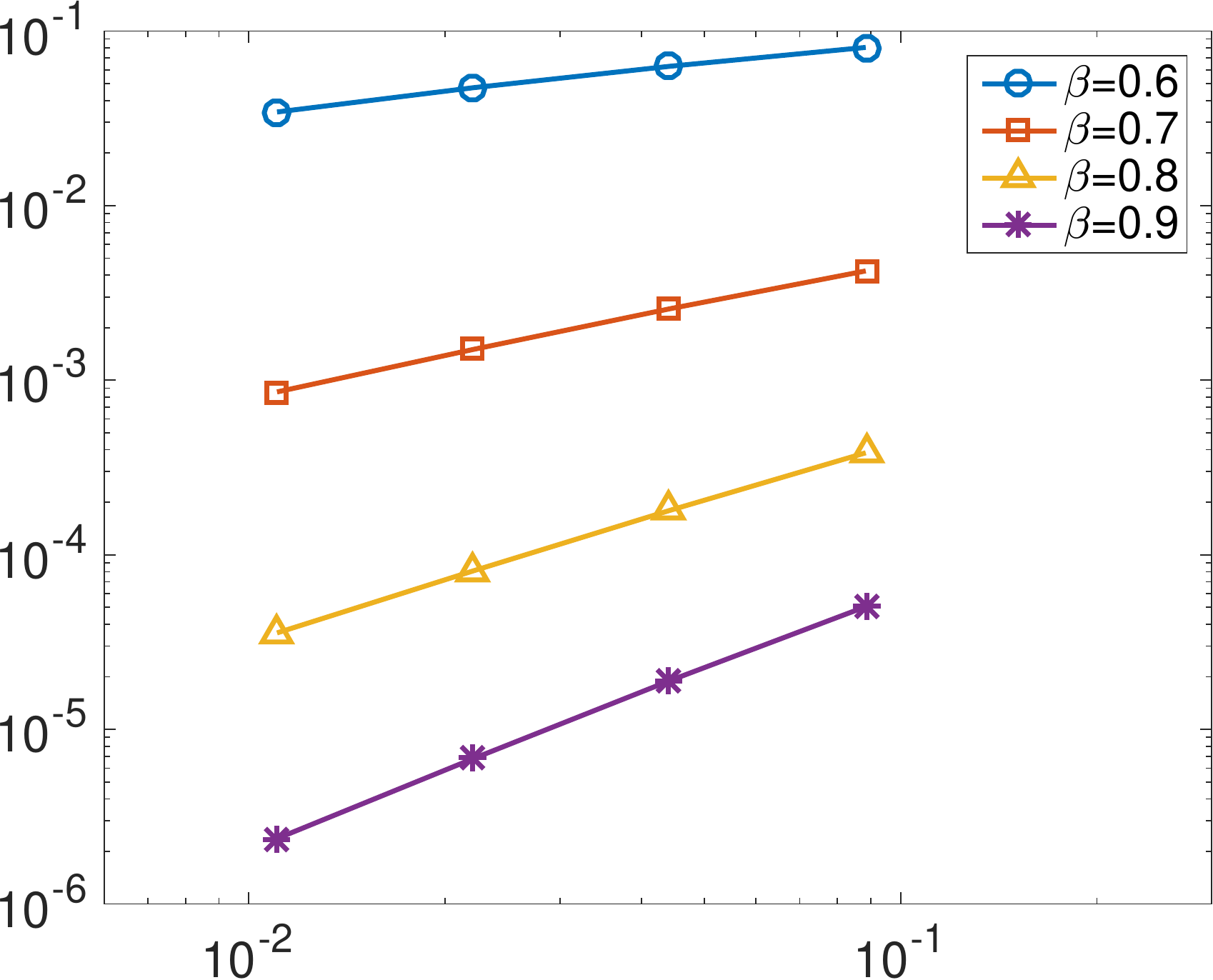}
\end{center}
\end{minipage}
\begin{minipage}[t]{0.405\linewidth}
\begin{center}
$f(u) = \Phi(20(u-0.5))$, $d=1$
\includegraphics[width=\linewidth]{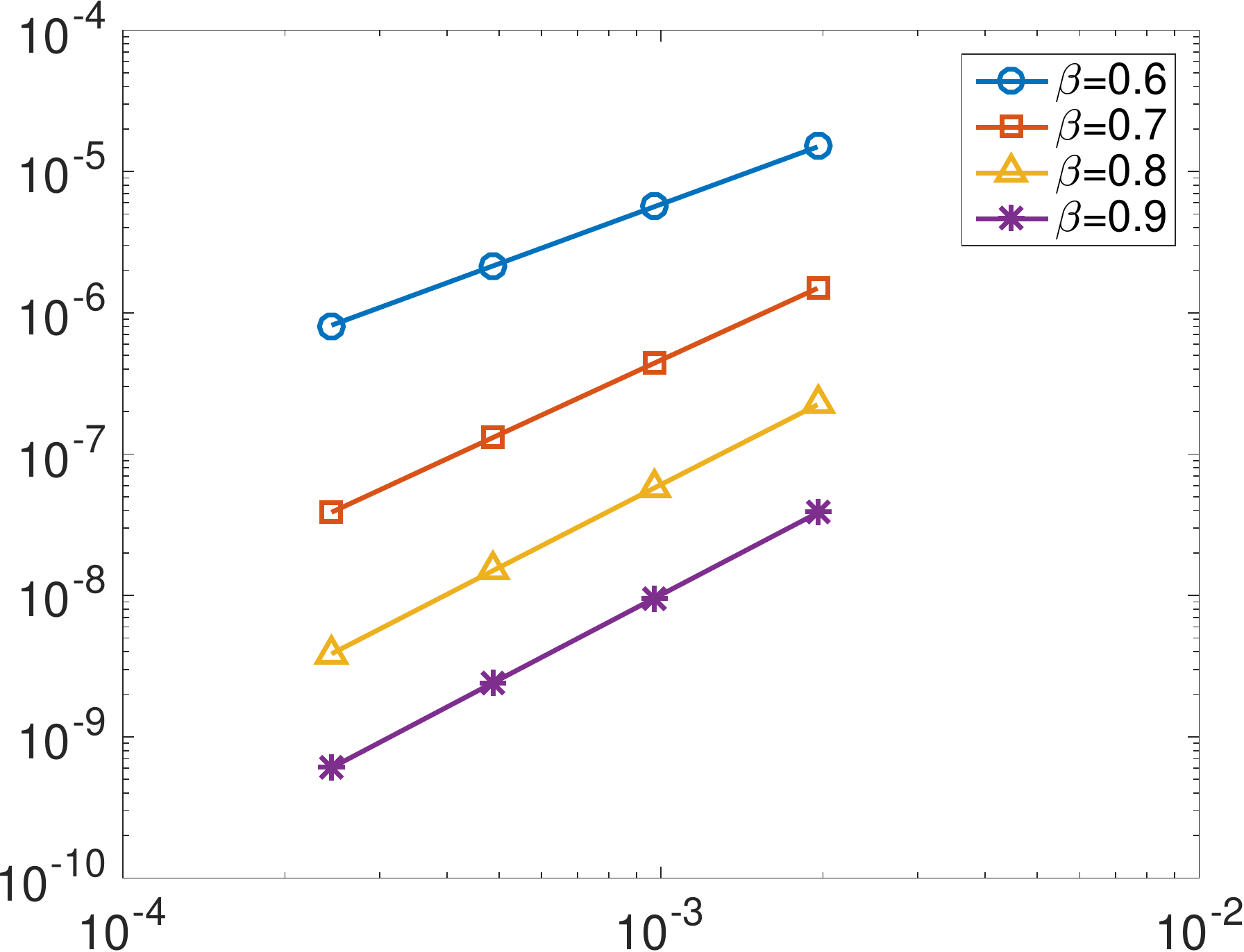}
\end{center}
\end{minipage}
\hspace*{1cm}
\begin{minipage}[t]{0.405\linewidth}
\begin{center}
$f(u) = \Phi(20(u-0.5))$, $d=2$
\includegraphics[width=\linewidth]{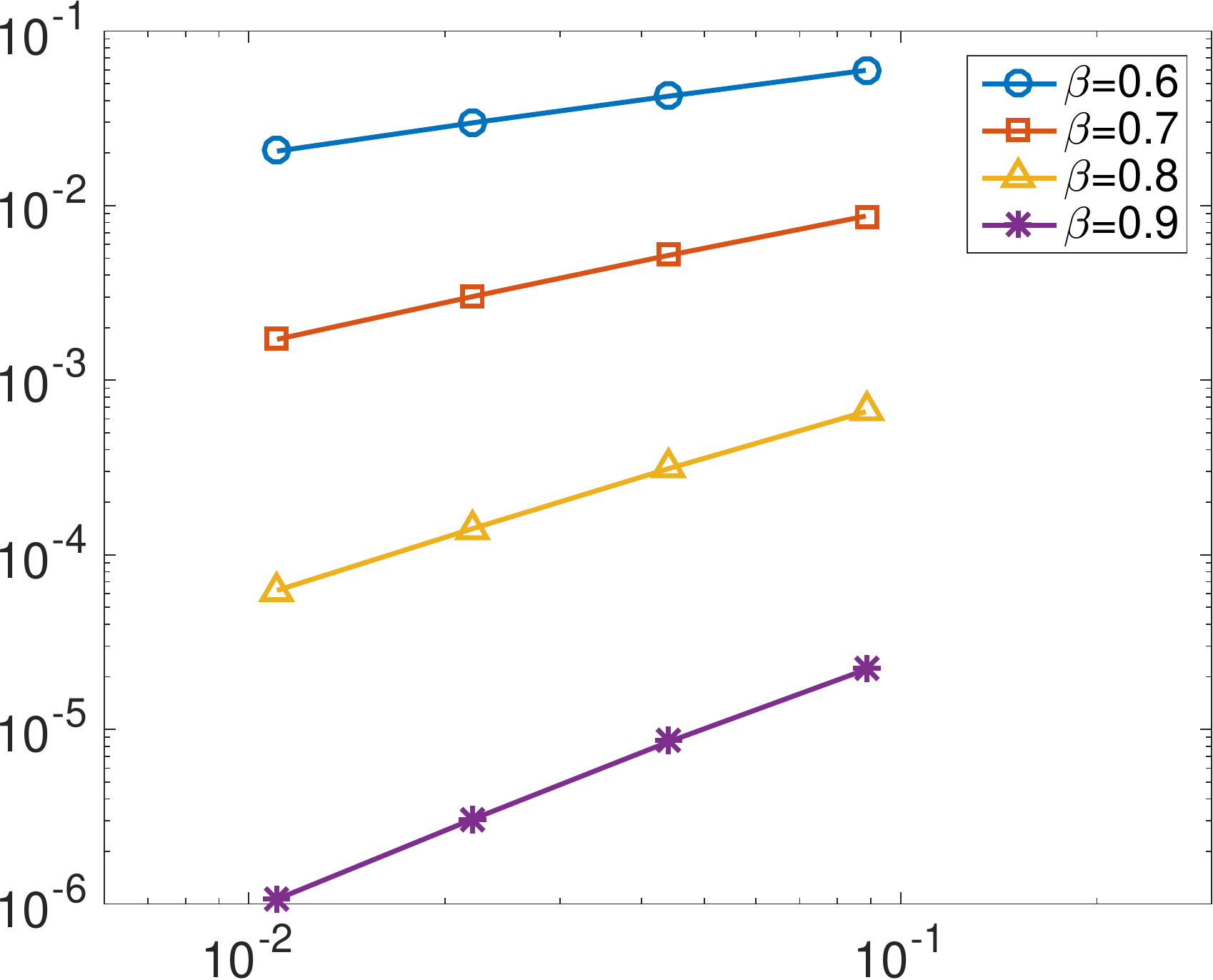}
\end{center}
\end{minipage}
\end{center}
\caption{Observed weak errors for $d=1,2$ and different values of $\beta$.
The errors for the four choices of $\varphi(u) = \int_{\cD} f(u(\xvec)) \rd \xvec$ are shown as
functions of the mesh size $h$ in a log-log scale.
The corresponding observed convergence rates are shown in Table~\ref{tab:rates}.}
\label{fig:errors}
\end{figure}

The resulting observed weak errors
$\mathrm{err}_{\ell} := |\bbE[\varphi(u)] - \bbE[\varphi(u_{h_{\ell},k}^Q)]|$
are shown in Figure \ref{fig:errors}.
For each function $\varphi$ and for each value of $\beta$,
we compute the empirical convergence rate $\mathrm{r}$ by a least-squares fit
of a line $\mathrm{c} + \mathrm{r} \ln h$ to the data set $\{h_{\ell}, \mathrm{err}_{\ell}\}$.
The results are shown in Table \ref{tab:rates}
and can be seen to validate the theoretical rates given in Theorem \ref{thm:weak-conv} for $d=1$.
For $d=2$, the observed rates deviate slightly
from the theoretical rates for $\beta=0.9$,
which is caused by the fact that we had to use
coarser finite element meshes for $d=2$ than for $d=1$
in order to be able to assemble the dense matrices
$\Qmat_{h,k}^{\beta} \in \bbR^{N_h\times N_h}$
for performing the simulation study
without Monte Carlo simulations.

\begin{table}[t]
\centering
\caption{\label{tab:rates}Observed (resp.~theoretical) rates of convergence for the weak errors shown in Figure \ref{fig:errors}}
\begin{tabular}{lccccc}
\toprule
& &  \multicolumn{4}{c}{$\beta$}\\
					& $f(u)$			&  $0.6$  		& $0.7$ 		& $0.8$ 		& $0.9$\\
\cmidrule(r){2-6}
\multirow{3}{*}{$d=1$}	& $|u|^2$  		& 1.396 (1.4) 	& 1.748 (1.8) 	& 1.945 (2) 	& 1.994 (2) \\
				  	& $|u|^3$  		& 1.397 (1.4) 	& 1.753 (1.8) 	& 1.949 (2) 	& 1.995 (2) \\
					& $|u|^4$  		& 1.398 (1.4) 	& 1.754 (1.8) 	& 1.951 (2) 	& 1.996 (2) \\
					& $\Phi(20(u-0.5))$  	& 1.398 (1.4) 	& 1.755 (1.8) 	& 1.952 (2) 	& 1.996 (2) \\
\cmidrule(r){1-6}
\multirow{3}{*}{$d=2$}	& $|u|^2$  		& 0.483 (0.4)	& 0.800 (0.8) 	& 1.139 (1.2)  	& 1.442 (1.6)\\
					& $|u|^3$  		& 0.442 (0.4)	& 0.783 (0.8) 	& 1.145 (1.2)  	& 1.465 (1.6)\\
					& $|u|^4$  		& 0.409 (0.4)	& 0.768 (0.8) 	& 1.143 (1.2)  	& 1.472 (1.6)\\
					& $\Phi(20(u-0.5))$  	& 0.512 (0.4) 	& 0.782 (0.8) 	& 1.135 (1.2) 	& 1.458 (1.6) \\
\bottomrule
\end{tabular}
\end{table}

\section{Conclusion}\label{section:conclusion}

Gaussian random fields are of great importance as models in spatial statistics.
A popular method for reducing the computational cost
for operations, which are needed during statistical inference,
is to represent the Gaussian field as a solution to an SPDE.
In this work, we have investigated a recent extension of this approach
to Gaussian random fields with general smoothness proposed in \cite{bkk17}.
The method considers the fractional order equation \eqref{e:Lbeta}
and is based on combining
a finite element discretization in space
with the quadrature approximation~\eqref{e:def:Qhk} of the inverse fractional power operator.
This yields an approximate solution $u_{h,k}^Q$ of the SPDE,
which in \cite{bkk17} was shown to converge
to the solution $u$ of~\eqref{e:Lbeta} in the strong mean-square sense
with rate~\eqref{e:rate-strong}.

In many applications one is mostly interested
in a certain quantity of the random field $u$
which can be expressed by $\varphi(u)$
for some real-valued function $\varphi$.
For this reason, the focus of the present work
has been the weak error
$|\bbE[\varphi(u)] - \bbE[\varphi(u_{h,k}^Q)] |$.
The main outcome of this article, Theorem \ref{thm:weak-conv},
shows convergence of this type of error to zero
at an explicit rate
for twice continuously Fr\'echet differentiable
functions~$\varphi$, which have a second derivative
of polynomial growth.
Notably, the component of the
convergence rate stemming from the
stochasticity of the problem
is doubled compared to the
strong convergence rate~\eqref{e:rate-strong} derived in \cite{bkk17}.
For proving this result, we have performed
a rigorous error analysis in \S\ref{sec:derivation},
which is based on an extension of the equation~\eqref{e:Lbeta}
to a time-dependent problem as well as
an associated Kolmogorov backward equation and It\^{o} calculus.

In order to validate
the theoretical findings,
we have performed a simulation study
for the stochastic model problem \eqref{e:matern}
on the domain $\cD = (0,1)^d$ for $d=1,2$
in \S\ref{section:numexp}.
This model is highly relevant for applications in spatial statistics,
since it is often used to approximate Gaussian Mat\'ern fields.
We have considered four different functions $\varphi$
and the fractional orders $\beta = 0.6, 0.7, 0.8, 0.9$.
The observed empirical weak convergence rates
can be seen to verify the theoretical results.
One of the considered functions $\varphi$
is based on a transformation of the random field
by a Gaussian cumulative distribution function.
Quantities of this form are particularly important
for applications to porous materials, as they are used
to model the pore volume fraction of the material, see, e.g., \cite{barman2017three}.
Thus, we see ample possibilities for applying the outcomes
of this work to problems in spatial statistics and related disciplines.

\bibliographystyle{siam}
\bibliography{bkk-bib}

\end{document}